\newtheorem{proposition}{Proposition}[section]
\newtheorem{theorem}[proposition]{Theorem}
\newtheorem{corollary}[proposition]{Corollary}
\newtheorem{remark}[proposition]{Remark}
\newenvironment{proof}{{\noindent \bf Proof:}}{\hfill $\fbox{}$ \vspace*{5mm}}
\DeclareMathOperator{\T}{T}
\renewcommand{\H}{\mathrm{H}}
\DeclareMathOperator{\diag}{diag}
\DeclareMathOperator{\Diag}{Diag}
\newcommand{\mvk}{\mathbb{V}_k}
\newcommand{\mpk}{{\mathbb{P}_k}}
\newcommand{\mpxi}{{\mathbb{P}_\xi}}
\newcommand{\wt}{\widetilde}
\newcommand{\bsmat}{\left[\begin{smallmatrix}}
\newcommand{\esmat}{\end{smallmatrix}\right]}
\begin{document}
\title{Perturbation Analysis of An Eigenvector-Dependent Nonlinear Eigenvalue Problem With Applications\thanks{This work was supported by NSFC No. 11671023, 11421101, 11671337, 11771188.}}
\author{Yunfeng Cai\thanks{LMAM \& DSEC, School of Mathematical Sciences, Peking Univ., Beijng, P.R. China,100871 (yfcai@math.pku.edu.cn).}
\and Zhigang Jia\thanks{School of Mathematics and Statistics, Jiangsu Normal University, Xuzhou, P.R. China, 221116 (zhgjia@jsnu.edu.cn). }
\and Zheng-Jian Bai\thanks{School of Mathematical Sciences and Fujian Provincial Key Laboratory on Mathematical Modeling \& High Performance Scientific Computing,  Xiamen University, Xiamen, P.R. China, 361005 (zjbai@xmu.edu.cn). The research of this author was partially supported by the Natural Science Foundation of Fujian Province of China (No. 2016J01035) and the Fundamental Research Funds for the Central Universities.}
}

\maketitle
\begin{abstract}
The eigenvector-dependent nonlinear eigenvalue problem (NEPv)  $A(P)V=V\Lambda$,
where the columns of  $V\in\mathbb{C}^{n\times k}$ are orthonormal,
$P=VV^{\H}$,  $A(P)$ is Hermitian, and $\Lambda=V^{\H}A(P)V$,
arises in many important applications, such as the discretized Kohn-Sham equation in electronic structure calculations and the trace ratio problem in linear discriminant analysis.
In this paper, we perform a perturbation analysis for the NEPv,
which gives upper bounds for the distance between the solution to the original NEPv and the solution to the perturbed NEPv. A condition number for the NEPv is introduced, which reveals the factors that affect the sensitivity of the solution.
Furthermore, two computable error bounds are given for the NEPv, which can be used to measure the quality of an approximate solution.
The theoretical results are validated by numerical experiments for the Kohn-Sham equation and the trace ratio optimization.
\end{abstract}

\vspace{3mm}

{\bf Keywords.}  nonlinear eigenvalue problem, perturbation analysis, Kohn-Sham equation, trace ratio optimization

\vspace{3mm}
{\bf AMS subject classifications.} 65F15, 65F30, 15A18, 47J10

\section{Introduction}\label{sec:intro}
In this paper, we study the perturbation theory of  the following eigenvector-dependent nonlinear eigenvalue problem (NEPv)
\begin{equation}\label{eq:nep}
A(P)V=V\Lambda,
\end{equation}
where $V\in\mathbb{C}^{n\times k}$ has orthonormal column  vectors,
$P=VV^{\H}$,
$A(P)$ is a continuous Hermitian matrix-valued function of $P$, 
and $\Lambda=V^{\H}A(P)V\in\mathbb{C}^{k\times k}$ is Hermitian,
the eigenvalues of $\Lambda$ are also eigenvalues of $A(P)$.
Usually, in practical applications, $k\ll n$, and the eigenvalues of $\Lambda$ are the $k$ smallest or largest eigenvalues of $A(P)$.
In this paper, we restrict our discussions to the case of the $k$ smallest eigenvalues.
Furthermore, we consider $A(P)$ in the following form
\begin{equation}\label{eq:a012}
A(P)=A_0+A_1(P)+A_2(P),
\end{equation}
where $A_0$, $A_1(P)$ and $A_2(P)$ are all Hermitian,
$A_0\in\mathbb{C}^{n\times n}$ is a constant matrix,
$A_1(P)$ is a homogeneous linear function of $P$,
and $A_2(P)$ is a nonlinear function of $P$.

Notice that if $V$ is a solution \eqref{eq:nep},
then so is $VQ$ for any $k\times k$ unitary matrix $Q$.
Therefore, two solutions $V$, $\wt{V}$ are essentially the same if $\mathcal{R}(V)=\mathcal{R}(\wt{V})$,
where $\mathcal{R}(V)$ and $\mathcal{R}(\wt{V})$ are the subspaces spanned
by the column vectors of $V$ and $\wt{V}$, respectively.
Throughout the rest of this paper,
when we say  that $V$ is a solution to \eqref{eq:nep},
we mean that the class $\{VQ \;|\; Q^{\H}Q=I_k\}$ solves \eqref{eq:nep}.

Perhaps, the most well-known NEPv of the form \eqref{eq:nep} is
the discretized Kohn-Sham (KS) equation arising from density function theory in electronic structure calculations (see~\cite{chdg:14,mart:2004,sacs:2010} and references therein).
NEPv~\eqref{eq:nep} also arises from
the trace ratio optimization in the linear discriminant analysis
for dimension reduction \cite{ngbs:2010,zhli:2014a,zhli:2014b},
and the Gross-Pitaevskii equation for modeling particles in the state of matter called the Bose-Einstein condensate \cite{badu:06,jakm:14,jixx:16}.
We believe that more potential applications will emerge.

The most widely used method for solving NEPv \eqref{eq:nep} is the  so-called self-consistent field (SCF) iteration \cite{mart:2004,sacs:2010}.
Starting with orthonormal $V_0\in\mathbb{C}^{n\times k}$,
at the $l$th SCF iteration, one computes an orthonormal eigenvector matrix $V_l$ associated with the $k$ smallest eigenvalues of $A(V_{l-1}V_{l-1}^{\H})$,
and then $V_l$ is used as the approximation in the next iteration.
Convergence analysis of SCF iteration for the KS equation is studied in \cite{liww:14,liww:15,yamw:07},
for the trace ratio problem in \cite{zhli:2014b}.
Quite recently,
in \cite{cazb:2017a}, an existence and uniqueness condition of the solutions to NEPv \eqref{eq:nep} is given,
and the convergence of the SCF iteration is also studied.

In practical applications, $A(P)$ is usually obtained from the discretization of operators or constructed from empirical data,
thus, contaminated by errors and noises.
As a result, the NEPv  \eqref{eq:nep} to be solved is in fact a perturbed NEPv.
So, it is natural to ask whether  we can trust the approximate solution obtained by solving the perturbed NEPv via certain numerical methods, say the SCF iteration.
To be specific, let the perturbed NEPv be of the form
\begin{equation}\label{eq:nep2}
\wt{A}(\wt{P})\wt{V}=\wt{V}\wt{\Lambda},
\end{equation}
where $\wt{V}$ has orthonormal column vectors,  $\wt{P}=\wt{V}\wt{V}^{\H}$,
$\wt{\Lambda}=\wt{V}^{\H}\wt{A}(\wt{P})\wt{V}\in\mathbb{C}^{k\times k}$,
and
\begin{equation}\label{eq:wa012}
\wt{A}(\wt{P})=\wt{A}_0+\wt{A}_1(\wt{P})+\wt{A}_2(\wt{P})
\end{equation}
is a continuous Hermitian matrix-valued function of $\wt{P}$,
$\wt{A}_0$ is a constant Hermitian matrix,
$\wt{A}_1$ and $\wt{A}_2$ are perturbed functions of $A_1$ and $A_2$, respectively,
and $\wt{A}_1(\wt{P})$, $\wt{A}_2(\wt{P})$ are still Hermitian.
Assume that the original NEPv \eqref{eq:nep} has a solution $V_*$.
Then we need to answer the following two fundamental questions:\\
{\bf Q1.} Under what conditions the perturbed NEPv \eqref{eq:nep2} has a solution $\wt{V}_*$ nearby $V_*$? \\
{\bf Q2.} What's the distance between  $\mathcal{R}(V_*)$  and $\mathcal{R}(\wt{V}_*)$?\\

Let  $\mathcal{X}$ and $\mathcal{Y}$ be two $k$-dimensional subspaces of $\mathbb{C}^n$. Let the  columns of $X$ form an orthonormal basis for $\mathcal{X}$ and the  columns of $Y$ form an orthonormal basis for $\mathcal{Y}$.
We use $\|\sin\Theta(\mathcal{X},\mathcal{Y})\|_2$ to measure the distance between  $\mathcal{X}$ and $\mathcal{Y}$,
where
\begin{equation}\label{eq:mat-angles-XY}
\Theta(\mathcal{X},\mathcal{Y})=\diag(\theta_1(\mathcal{X},\mathcal{Y}),\ldots,\theta_k(\mathcal{X},\mathcal{Y})).
\end{equation}
Here, $\theta_j(\mathcal{X},\mathcal{Y})$'s denote the $k$ {\em canonical angles} between $\mathcal{X}$ and $\mathcal{Y}$ \cite[p. 43]{stsu:1990},
which can be defined as
\begin{equation}\label{eq:indv-angles-XY}
0\le\theta_j(\mathcal{X},\mathcal{Y}):=\arccos\sigma_j\le\frac {\pi}2\quad\mbox{for $1\le j\le k$},
\end{equation}
where $\sigma_j$'s are the singular values of $X^{\H}Y$.


In this paper, we will focus on {\bf Q1} and {\bf Q2}.
The results are established via two approaches.
One is based on the well-known $\sin\Theta$ theorem in the perturbation theory of Hermitian matrices \cite{daka:70} and Brouwer's fixed-point theorem \cite{kham:14};
The other is inspired by J.-G. Sun's technique (e.g., \cite{lin2002perturbation, sun1998perturbation, sun2003perturbation,sun2003perturbation2}) --
finding the radius of the perturbation by constructing an equation of the radius via the fixed-point theorem.
Two perturbation bounds can be obtained from these two approaches, and each of them has its own merits.
Based on the perturbation bounds, a condition number for the NEPv \eqref{eq:nep} is introduced,
which quantitatively reveals the factors that affect the sensitivity of the solution.
As corollaries, two computable error bounds are provided to measure the quality of the computed solution.
Theoretical results are validated by numerical experiments for the KS equation and the trace ratio optimization.

\medskip

The rest of this paper is organized as follows.
In section~\ref{sec:theory}, we use two approaches to answer {\bf Q1} and {\bf Q2},
followed by some discussions on the condition number and error bounds for NEPv \eqref{eq:nep}.
In section~\ref{sec:appl}, we apply our theoretical results to the KS equation and the trace ratio optimization problem, respectively.
Finally, we give our concluding remarks in section~\ref{sec:conclusion}.

\section{Main results}\label{sec:theory}
In this section we provide two approaches to answer {\bf Q1} and {\bf Q2}. A condition number and error bounds for NEPv  will also be discussed.
Before we proceed, we introduce the following notation,
which will be used throughout the rest of this paper.

$\mathbb{C}^{n\times m}$ stands for the set of all $n\times m$  matrices with complex entries.
The superscripts ``${\cdot}^{\T}$'' and ``${\cdot}^{\H}$'' take the transpose and the complex
conjugate transpose of a matrix or vector, respectively.
The symbol $\|\cdot\|_2$ denotes the 2-norm of a matrix or vector.
Unless otherwise specified, we denote by $\lambda_j(H)$ for $1\le j\le n$
the eigenvalues of a Hermitian matrix $H\in\mathbb{C}^{n\times n}$ and they are always arranged in nondecreasing order:
$\lambda_1(H)\le\lambda_2(H)\le\cdots\le\lambda_n(H)$.
Define
\begin{subequations}
\begin{align}
\mvk&:=\{V\in\mathbb{C}^{n\times k}\; \big| \; V^{\H}V=I_k\},\\
\mpk&:=\{P\in\mathbb{C}^{n\times n}\; \big| \; P=VV^{\H}, V\in\mvk\}.
\end{align}
\end{subequations}

Let $V_*$, $\wt{V}_*\in\mvk$ be the solutions to \eqref{eq:nep} and \eqref{eq:nep2}, respectively.
For any $\xi>0$, define
\begin{align}\label{vxi}
\mathbb{V}_\xi&:=\{V\in\mathbb{C}^{n\times k}\; \big| \; V^{\H}V=I_k, \|\sin\Theta(\mathcal{R}(V), \mathcal{R}(V_*))\|_2\le \xi\},\\
\mathbb{P}_\xi&:=\{P\in\mathbb{C}^{n\times n}\; \big| \; P=VV^{\H}, V\in\mathbb{V}_\xi\}.
\end{align}
Denote $P_*=V_*V_*^{\H}$, $\wt{P}_*=\wt{V}_*\wt{V}_*^{\H}$, $\Delta A_0 = \wt{A}_0 - A_0$,
and also
\begin{subequations}\label{ad}
\begin{align}
\delta_0 &= \|\wt{A}_0 - A_0\|_2,& &\\
\delta_1 &= \sup_{{P\in\mpxi}} {\|\wt{A}_1(P) - A_1(P)\|_2},&
d_1 &=\sup_{P\ne P_*, { P\in\mpxi}}  \frac{\|A_1(P) - A_1(P_*)\|_2}{\|P - P_*\|_2},
\\
\delta_2 &= \sup_{{P\in\mpxi}} {\|\wt{A}_2(P) - A_2(P)\|_2},&
d_2 &= \sup_{P\ne P_*, { P\in\mpxi}}  \frac{\|A_2({P}) - A_2(P_*)\|_2}{\|{P} - P_*\|_2},
\\
\delta & = \delta_0+\delta_1+\delta_2, & d&= d_1+d_2.
\end{align}
\end{subequations}
Note here that $\delta$ can be used to measure the magnitude of the perturbation, and
$d$ is a ``local Lipschitz constant'' such that
\begin{equation}
\|A(P)-A(P_*)\|_2\le d\|P-P_*\|_2
\end{equation}
for all $P\in\mathbb{P}_{\xi}$. Thus, we may use $d$ to measure the sensitivity of $A(P)$ within $\mpxi$.

\subsection{Approach one}
In this subsection, we use
the famous Weyl  Theorem \cite[p.203]{stsu:1990},
Davis-Kahan $\sin\Theta$ theorem \cite{daka:70},
and Brouwer's fixed-point theorem \cite{kham:14}
to answer questions {\bf Q1} and {\bf Q2}.

\begin{theorem}\label{thm1}
Let $V_*\in\mvk$ be a solution to \eqref{eq:nep}, $P_*=V_*V_*^{\H}$,
and
\begin{equation}\label{g}
g=\lambda_{k+1}(A(P_*))-\lambda_k(A(P_*))>0.
\end{equation}
If
\begin{equation}\label{con2}
\delta< \frac12 \ g -d,
\end{equation}
then the perturbed NEPv \eqref{eq:nep2} has a solution $\wt{V}_*\in\mathbb{V}_{\xi_*}$ with
\begin{align}\label{xistar}
\xi_*=\frac{2\delta}{g - d - \delta + \sqrt{(g - d - \delta)^2-4d\delta}}.
\end{align}
\end{theorem}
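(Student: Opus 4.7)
The plan is to construct a continuous self-map $F$ on the compact set $\mathbb{P}_{\xi_*}$ whose fixed points correspond to solutions of \eqref{eq:nep2} lying within $\sin\Theta$-distance $\xi_*$ of $\mathcal{R}(V_*)$, and then invoke Brouwer's fixed-point theorem. Concretely, for each $P\in\mathbb{P}_{\xi_*}$ I set $F(P)=\wt{V}_{P}\wt{V}_{P}^{\H}$, where the columns of $\wt{V}_{P}$ form any orthonormal basis for the invariant subspace of $\wt{A}(P)$ associated with its $k$ smallest eigenvalues. A fixed point $\wt{P}_{*}=F(\wt{P}_{*})$ then produces the desired solution $\wt{V}_{*}$ to \eqref{eq:nep2}.

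To make $F$ well defined and continuous, write $E=\wt{A}(P)-A(P_{*})$, split it into contributions from $\wt{A}_{j}-A_{j}$ and from $A_{j}(P)-A_{j}(P_{*})$ for $j=0,1,2$, and use the definitions in \eqref{ad} together with the identity $\|P-P_{*}\|_{2}=\|\sin\Theta(\mathcal{R}(V),\mathcal{R}(V_{*}))\|_{2}\le\xi_{*}$ to obtain $\|E\|_{2}\le\delta+d\xi_{*}$. Weyl's theorem then forces a spectral gap of $\wt{A}(P)$ between its $k$-th and $(k+1)$-th eigenvalues of at least $g-2(\delta+d\xi_{*})$. I will verify in passing that $\xi_{*}<1$ under \eqref{con2} (by evaluating the quadratic below at $\xi=1$, where it equals $2d+2\delta-g<0$), so this gap is strictly positive, and the standard continuity of the spectral projector onto the bottom $k$ eigenspace under a uniform gap yields continuity of $F$.

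The heart of the proof is the invariance $F(\mathbb{P}_{\xi_*})\subset\mathbb{P}_{\xi_*}$, obtained from the Davis-Kahan $\sin\Theta$ theorem applied to $A(P_{*})$ and $\wt{A}(P)$. Since the bottom $k$ eigenvalues of $A(P_{*})$ are $\le\lambda_{k}(A(P_{*}))$ while the top $n-k$ eigenvalues of $\wt{A}(P)$ are $\ge\lambda_{k+1}(A(P_{*}))-\|E\|_{2}$ by Weyl, the relevant separation is at least $g-\|E\|_{2}$, and the theorem yields
\begin{equation*}
\|\sin\Theta(\mathcal{R}(F(P)),\mathcal{R}(V_{*}))\|_{2}\le\frac{\|E\|_{2}}{g-\|E\|_{2}}\le\frac{\delta+d\xi_{*}}{g-\delta-d\xi_{*}}.
\end{equation*}
Requiring the right-hand side to be at most $\xi_{*}$ is equivalent to the quadratic inequality $d\xi_{*}^{2}-(g-d-\delta)\xi_{*}+\delta\le 0$. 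The hypothesis \eqref{con2} forces $g-d-\delta>d+\delta\ge 2\sqrt{d\delta}$ by AM-GM, so the discriminant $(g-d-\delta)^{2}-4d\delta$ is strictly positive and the quadratic has two positive real roots; taking $\xi_{*}$ to be the smaller one and rationalizing reproduces exactly \eqref{xistar}.

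Finally, via the standard Grassmannian parametrization near $\mathcal{R}(V_{*})$, every subspace at $\sin\Theta$-distance $\le\xi_{*}<1$ from $\mathcal{R}(V_{*})$ corresponds bijectively and homeomorphically to a matrix $W\in\mathbb{C}^{(n-k)\times k}$ with $\|W\|_{2}\le\xi_{*}/\sqrt{1-\xi_{*}^{2}}$, so $\mathbb{P}_{\xi_*}$ is homeomorphic to a compact convex ball. Brouwer's fixed-point theorem then supplies a fixed point $\wt{P}_{*}\in\mathbb{P}_{\xi_*}$ of $F$, and any orthonormal basis $\wt{V}_{*}$ of $\mathcal{R}(\wt{P}_{*})$ is a solution to \eqref{eq:nep2} with $\wt{V}_{*}\in\mathbb{V}_{\xi_*}$. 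I expect the main obstacle to be the tight $\delta,d$-bookkeeping in the Davis-Kahan bound and the subsequent quadratic optimization that determines \eqref{xistar}; the other ingredients (Weyl, continuity of spectral projectors, Brouwer) are essentially off-the-shelf once the set-up is in place.
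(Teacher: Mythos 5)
Your proof is correct and follows essentially the same route as the paper's: define the map $\phi(P)$ that sends $P$ to the spectral projector of $\wt{A}(P)$ onto its $k$ smallest eigenvalues, bound $\|\wt{A}(P)-A(P_*)\|_2\le\delta+d\xi_*$, keep the eigenvalue gap open via Weyl's theorem, prove the self-map property $\phi(\mathbb{P}_{\xi_*})\subset\mathbb{P}_{\xi_*}$ via Davis--Kahan (which reduces exactly to the quadratic $d\xi^2-(g-d-\delta)\xi+\delta\le 0$, whose smaller root rationalizes to \eqref{xistar}), and invoke Brouwer's fixed-point theorem. Your explicit Grassmannian parametrization showing $\mathbb{P}_{\xi_*}$ is homeomorphic to a compact convex ball is a small but genuine refinement, since the paper simply asserts that $\mathbb{P}_{\xi_*}$ is convex, which is not literally true of a set of rank-$k$ projectors and is better justified the way you do it.
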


\begin{proof}
Using \eqref{con2}, we know that $\xi_*$ given by \eqref{xistar} is a positive constant.
Then it is easy to see that $\mathbb{P}_{\xi_*}$ is a nonempty  bounded  closed convex set in $\mathbb{C}^{n\times k}$.
For any  $\wt{V}\in\mathbb{V}_{\xi_*}$,  letting $\wt{P}=\wt{V}\wt{V}^{\H}$,
we define $\phi(\wt{P})=\wt{P}_\phi=\wt{V}_\phi\wt{V}_\phi^{\H}$ for $\wt{V}_\phi=[\tilde{v}_{\phi 1},\dots,\tilde{v}_{\phi k}]$,
where $\tilde{v}_{\phi j}$ is an eigenvector of $\wt{A}(\wt{P})$ corresponding with $\lambda_j(\wt{A}(\wt{P}))$ for $j=1,\dots, k$
and $\phi(\wt{P})\in\mathbb{P}_{\xi_*}$.
If we can show that
\begin{itemize}
\item[(a)]
$\lambda_{k+1}(\wt{A}(\wt{P}))-\lambda_k(\wt{A}(\wt{P}))>0$
(which implies that the mapping $\phi(\cdot)$ is well-defined in the sense that  $\phi(\wt{P})$ is unique);
\item[(b)]
$\phi(\cdot)$ is a continuous mapping within $\mathbb{P}_{\xi_*}$;
\item[(c)]
$\phi(\wt{P})\in\mathbb{P}_{\xi_*}$,
\end{itemize}
then by Brouwer's fixed-point theorem \cite{kham:14},
$\phi(\wt{P})$ has a fixed point in $\mathbb{P}_{\xi_*}$.
Let $\wt{P}_*=\wt{V}_*\wt{V}_*^{\H}$ be  the fixed point, where $\wt{V}_*\in\mathbb{V}_{\xi_*}$. Then $\wt{V}_*$ is a solution to the perturbed NEPv \eqref{eq:nep2}.
Hence the conclusion follows immediately.
Next, we show $(a)$, $(b)$ and $(c)$ in order.

\medskip

\noindent{\em Proof of $(a)$}
First, using \eqref{con2} and \eqref{xistar}, we have
\begin{align}
\xi_*&<\frac{2\delta}{g - d - \delta + \sqrt{( d + \delta)^2-4d\delta}}\notag\\
&=\frac{2\delta}{g - d - \delta + | d - \delta|}\notag\\
&=\left\{\begin{array}{cc}
\frac{2\delta}{g  -2 \delta }, & \mbox{if } d\ge \delta,\\
\frac{2\delta}{g - 2d }, & \mbox{otherwise}
\end{array}\right.\notag\\
&<1.\label{xil}
\end{align}

Second, direct calculations give rise to
\begin{subequations}\label{da}
\begin{align}
\|\wt{A}(\wt{P})-A(P_*)\|_2
&\le \|\wt{A}_0-A_0\|_2+ \|\wt{A}_1(\wt{P})-A_1(P_*)\|_2+ \|\wt{A}_2(\wt{P})-A_2(P_*)\|_2\notag\\
&\le \delta_0 +  \|\wt{A}_1(\wt{P})-{A}_1(\wt{P})\|_2+ \|{A}_1(\wt{P})-A_1(P_*)\|_2 \notag\\
&\mbox{}\qquad\quad \, + \|\wt{A}_2(\wt{P})-{A}_2(\wt{P})\|_2+ \|{A}_2(\wt{P})-A_2(P_*)\|_2\notag\\
&\le \delta+ d\|\wt{P}-P_*\|_2\label{da1}\\
&\le \delta+d\xi_*,\label{da2}
\end{align}
\end{subequations}
where \eqref{da1} uses \eqref{ad}, \eqref{da2} uses
$\|\wt{P}-P_*\|_2=\|\sin\Theta(\mathcal{R}(V_*), \mathcal{R}(\wt{V}))\|_2$ and $\wt{V}\in \mathbb{V}_{\xi_*}$.

Third, by  the famous Weyl Theorem \cite[p.203]{stsu:1990}, we have
\begin{align}\label{weyl}
|\lambda_j(\wt{A}(\wt{P}))-\lambda_j(A(P_*))|\le \|\wt{A}(\wt{P})-A(P_*)\|_2,
\ \mbox{for } j=1,2,\dots, n.
\end{align}
Then it follows that
\begin{subequations}\label{lamd}
\begin{align}
&\mbox{}\quad\ \lambda_{k+1}(\wt{A}(\wt{P}))-\lambda_k(\wt{A}(\wt{P}))\notag\\
&=g+[\lambda_{k+1}(\wt{A}(\wt{P})) - \lambda_{k+1}(A(P_*))] +[\lambda_k(A(P_*))- \lambda_k(\wt{A}(\wt{P}))]\notag\\
&\ge g - 2\|\wt{A}(\wt{P})-A(P_*)\|_2\label{lamd1}\\
&\ge g - 2\delta -2 d\xi_*\label{lamd2}\\
&>0,\label{lamd3}
\end{align}
\end{subequations}
where \eqref{lamd1} uses  \eqref{weyl},
\eqref{lamd2} uses \eqref{da},
\eqref{lamd3} uses \eqref{xil} and \eqref{con2}.

\medskip

\noindent{\em Proof of $(b)$}
We verify that $\phi(\cdot)$ is a continuous mapping  within $\mathbb{P}_{\xi_*}$ by showing that for any $\wt{V}_1$, $\wt{V}_2\in\mathbb{V}_{\xi_*}$,
$\|\phi(\wt{P}_1)-\phi(\wt{P}_2)\|_2\rightarrow 0$ as $\|\wt{P}_1-\wt{P}_2\|_2\rightarrow 0$, where
$\wt{P}_1=\wt{V}_1\wt{V}_1^{\H}$ and $\wt{P}_2=\wt{V}_2\wt{V}_2^{\H}$.

Let $\phi(\wt{P}_1)=\wt{V}_{1\phi}\wt{V}_{1\phi}^{\H}$, $\phi(\wt{P}_2)=\wt{V}_{2\phi}\wt{V}_{2\phi}^{\H}$, and
\begin{align*}
\wt{R}=\wt{A}(\wt{P}_1)\wt{V}_{2\phi} - \wt{V}_{2\phi}\diag(\lambda_1(\wt{A}(\wt{P}_2)),\dots,\lambda_k(\wt{A}(\wt{P}_2))).
\end{align*}
Then
\begin{align*}
\wt{R}=[\wt{A}(\wt{P}_1) - \wt{A}(\wt{P}_2)]\wt{V}_{2\phi},
\end{align*}
and hence
\begin{align*}
&\|\wt{R}\|_2=\|[\wt{A}(\wt{P}_1)-\wt{A}(\wt{P}_2)] \wt{V}_{2\phi}\|_2\le \|\wt{A}(\wt{P}_1)-\wt{A}(\wt{P}_2)\|_2.
\end{align*}
Using  \eqref{xil}--\eqref{weyl}, we have
\begin{align}
&\mbox{}\quad \ \lambda_{k+1}(\wt{A}(\wt{P}_2)) - \lambda_{k}(\wt{A}(\wt{P}_1))\notag\\
&=g +[\lambda_{k+1}(\wt{A}(\wt{P}_2))-\lambda_{k+1}(A(P_*))] - [\lambda_{k}(\wt{A}(\wt{P}_1))-\lambda_k(A(P_*))]\notag\\
&\ge g - 2(\delta+d\xi_*)
\ge g-2(\delta+d)>0.\label{gg}
\end{align}
By Davis-Kahan $\sin\Theta$ theorem \cite{daka:70}, we have
\begin{align}\label{sin12}
\|\sin\Theta(\mathcal{R}(\wt{V}_{1\phi}), \mathcal{R}(\wt{V}_{2\phi}))\|_2\le \frac{\|\wt{R}\|_2}{\lambda_{k+1}(\wt{A}(\wt{P}_2)) - \lambda_{k}(\wt{A}(\wt{P}_1))}.
\end{align}
Letting $\|\wt{P}_1-\wt{P}_2\|_2\rightarrow 0$, we know that
$\|\wt{R}\|_2\rightarrow 0$  since $\wt{A}(\cdot)$ is continuous.
Then it follows from \eqref{gg} and \eqref{sin12} that
\begin{align*}
\|\phi(\wt{P}_1)-\phi(\wt{P}_2)\|_2=\|\sin\Theta(\mathcal{R}(\wt{V}_{1\phi}), \mathcal{R}(\wt{V}_{2\phi}))\|_2 \le \frac{\|\wt{R}\|_2}{g-2(\delta+d)}\rightarrow 0.
\end{align*}
Therefore, $\|\phi(\wt{P}_1)-\phi(\wt{P}_2)\|_2\rightarrow 0$.

\medskip

\noindent{\em Proof of $(c)$}
Define
\[
R=\wt{A}(\wt{P})V_* - V_*\Lambda_*,
\]
where $\Lambda_*=V_*^{\H}A(P_*)V_*$.
Then
\begin{align}\label{R}
R=[\wt{A}(\wt{P})-{A}(P_*)]V_*.
\end{align}
Using \eqref{da} and \eqref{weyl}, we  have
\begin{align}
\lambda_{k+1}(A(P_*)) - \lambda_k(\wt{A}(\wt{P}))
&=\lambda_{k+1}(A(P_*))-{\lambda}_k(A(P_*))+\lambda_{k}(A(P_*))-\lambda_k(\wt{A}(\wt{P}))\notag\\
&\ge g - \delta - d\xi_*>0.\label{lamdif}
\end{align}
Then it follows that
\begin{subequations}\label{sin}
\begin{align}
&\mbox{}\quad\|P_*-\phi(\wt{P}))\|_2=\|\sin\Theta(\mathcal{R}(V_*),\mathcal{R}(\wt{V}))\|_2\notag\\
&\le \frac{\|R\|_2}{\lambda_{k+1}(A(P_*))-{\lambda}_k(\wt{A}(\wt{P}))}\label{sin1}\\
&\le  \frac{\|\wt{A}(\wt{P})-A(P_*)\|_2}{g-\delta-d\xi_*}\label{sin2}\\
&\le  \frac{\delta + d\xi_*}{g - \delta - d\xi_*}\label{sin3}\\
&=\xi_*,\label{sin4}
\end{align}
\end{subequations}
where \eqref{sin1} uses Davis-Kahan $\sin\Theta$ theorem \cite{daka:70},
\eqref{sin2} uses \eqref{R} and \eqref{lamdif},
\eqref{sin3} uses \eqref{da},
\eqref{sin4} uses  \eqref{xistar}.
Therefore, $\phi(\wt{P})\in\mathbb{P}_{\xi_*}$.
This completes the proof.
\end{proof}

\begin{remark}{\rm
The above approach is inspired by \cite{zhang2013perturbation} and is also used in \cite{cazb:2017a},
where the existence and uniqueness of the solution to \eqref{eq:nep} and the convergence of the SCF iteration are studied.
}\end{remark}

\subsection{Approach two}
In this subsection, we use another approach to answer questions {\bf Q1} and {\bf Q2},
which is inspired by J.-G. Sun's technique, see e.g., \cite{lin2002perturbation, sun1998perturbation, sun2003perturbation,sun2003perturbation2}.

\begin{theorem}\label{thm2}
Let $V_*\in\mvk$ be a solution to \eqref{eq:nep}, $P_*=V_*V_*^{\H}$, $g$ be given by \eqref{g},
and
\begin{equation}\label{gzeta}
h=\max_{1\le j\le k} [\lambda_{k+j}(A(P_*))  -  \lambda_j(A(P_*))],
\qquad
\zeta=\frac{\sqrt{g}}{\sqrt{g}+\sqrt{2h}}.
\end{equation}
Assume that $\delta$ is sufficiently small such that
\begin{equation}\label{feta}
f(\eta)\equiv g \eta - d\eta\sqrt{1+\eta^2} - (1+\eta^2)\delta = 0
\end{equation}
has positive roots,
and its smallest positive root, denoted by $\eta_*$, is smaller than $\zeta$.
Then the perturbed NEPv \eqref{eq:nep2} has a solution $\wt{V}_*\in\mathbb{V}_{\tau_*}$
with
\begin{equation}\label{tau}
\tau_*=\frac{\eta_*}{\sqrt{1+\eta_*^2}}.
\end{equation}
\end{theorem}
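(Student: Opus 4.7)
The plan is to emulate J.-G.\ Sun's parameterization technique. Fix an orthonormal complement $V_{*\perp}\in\mathbb{C}^{n\times(n-k)}$ of $V_*$ and represent every candidate subspace near $\mathcal{R}(V_*)$ by $\mathcal{R}(\wt V(E))$ with
\[
\wt V(E)=(V_*+V_{*\perp}E)(I_k+E^{\H}E)^{-1/2},\quad E\in\mathbb{C}^{(n-k)\times k}.
\]
A CS-style decomposition shows that the cosines of the canonical angles between $\mathcal{R}(V_*)$ and $\mathcal{R}(\wt V(E))$ are $1/\sqrt{1+\sigma_j^2(E)}$, so with $\eta:=\|E\|_2$ one has $\|\sin\Theta(\mathcal{R}(V_*),\mathcal{R}(\wt V(E)))\|_2=\eta/\sqrt{1+\eta^2}$. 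This identifies the target radius $\tau_*$ with the image of the tangent-type radius $\eta_*$ and recasts the theorem as an existence claim for $E$ with $\|E\|_2\le\eta_*$.

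Next I reduce~\eqref{eq:nep2} to a Riccati equation for $E$. Substituting $\wt V(E)$, setting $\wt P=\wt V(E)\wt V(E)^{\H}$, projecting against $V_*$ and $V_{*\perp}$ to eliminate $\wt\Lambda$, and clearing the $(I+E^{\H}E)^{\pm 1/2}$ factors yields
\[
A_{22}E-EA_{11}=G(E):=E\Delta_{11}(E)-\Delta_{22}(E)E+E\Delta_{12}(E)E-\Delta_{21}(E),
\]
where $A_{ij}$ are blocks of $A(P_*)$ in the basis $[V_*,V_{*\perp}]$ and $\Delta_{ij}(E)$ are the corresponding blocks of $\wt A(\wt P)-A(P_*)$. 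Since $A_{11}$ and $A_{22}$ carry the bottom-$k$ and top-$(n-k)$ eigenvalues of $A(P_*)$, respectively, $\mathrm{sep}(A_{11},A_{22})=g$, so the Sylvester operator on the left is invertible with inverse of norm at most $1/g$. The splitting $\wt A(\wt P)-A(P_*)=[\wt A(\wt P)-A(\wt P)]+[A(\wt P)-A(P_*)]$ together with $\|\wt P-P_*\|_2=\eta/\sqrt{1+\eta^2}$ gives $\|\wt A(\wt P)-A(P_*)\|_2\le\delta+d\eta/\sqrt{1+\eta^2}$, and bounding the four blocks of $G(E)$ by this estimate and $\|E\|_2=\eta$ leads, after regrouping, to the scalar inequality $f(\eta)\le 0$.

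The existence claim then follows by a Brouwer fixed-point argument: define $\Phi(E)$ as the unique $E'$ solving $A_{22}E'-E'A_{11}=G(E)$. Continuity of $\Phi$ follows from continuity of $\wt A(\cdot)$ and of the Sylvester inverse. By the scalar inequality and the definition of $\eta_*$ as the smallest positive root of $f$, the map $\Phi$ sends the closed ball $\{E:\|E\|_2\le\eta_*\}$ into itself; Brouwer's theorem delivers a fixed point $E_*$, and $\wt V_*:=\wt V(E_*)$ solves~\eqref{eq:nep2} with $\|\sin\Theta(\mathcal{R}(V_*),\mathcal{R}(\wt V_*))\|_2=\eta_*/\sqrt{1+\eta_*^2}=\tau_*$.

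The hard part is twofold. First, arriving at the exact form $f(\eta)=g\eta-d\eta\sqrt{1+\eta^2}-(1+\eta^2)\delta$ rather than a looser $g\eta\le(\delta+d\tau)(1+\eta)^2$-type bound will require a careful treatment of the four $\Delta_{ij}(E)$ contributions, distinguishing how the off-diagonal blocks $\Delta_{21}$ and $E\Delta_{12}E$ feed into the $(1+\eta^2)\delta$-term from how the diagonal blocks $\Delta_{11}$ and $\Delta_{22}$ produce the mixed $d\eta\sqrt{1+\eta^2}$-term. Second, the threshold $\zeta=\sqrt{g}/(\sqrt{g}+\sqrt{2h})$ must be used to ensure that the $k$-dimensional invariant subspace selected by the fixed point actually corresponds to the bottom $k$ eigenvalues of $\wt A(\wt P_*)$; pinning this down should rely on Weyl's inequality together with the maximal pair-difference $h$ to keep the intended eigenvalues strictly below the unwanted ones throughout the ball $\{\|E\|_2\le\eta_*\}$.
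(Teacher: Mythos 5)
Your parameterization $\wt V(E)$, the reduction to a Sylvester/Riccati equation, and the Brouwer fixed-point argument all match the paper's proof structurally, but there is a genuine gap at exactly the step you flag, and your proposed fix misdiagnoses where the two terms of $f$ come from. Bounding the four blocks of $G(E)$ separately by the triangle inequality gives only
$\|G(E)\|_2 \le \eta\|\Delta_{11}\|_2 + \eta\|\Delta_{22}\|_2 + \eta^2\|\Delta_{12}\|_2 + \|\Delta_{21}\|_2 \le (1+\eta)^2\,\|\wt A(\wt P)-A(P_*)\|_2$,
and since $(1+\eta)^2>1+\eta^2$ for $\eta>0$ this will \emph{not} close the fixed-point estimate at $\eta=\eta_*$. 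Moreover, the terms $(1+\eta^2)\delta$ and $d\eta\sqrt{1+\eta^2}$ do not originate from different blocks of $\Delta$ as you suggest; the split comes solely from $\|\wt A(\wt P)-A(P_*)\|_2\le\delta+d\|\wt P-P_*\|_2 = \delta + d\eta/\sqrt{1+\eta^2}$, applied to the whole of $G(E)$ at once. The correct route (and the paper's) is to never decompose $G(E)$ into blocks: note that $G(E) = -\bigl(-EV_*^{\H}+V_c^{\H}\bigr)\,[\wt A(\wt P)-A(P_*)]\,\bigl(V_*+V_cE\bigr)$ is a product of three factors, the outer two being $[-E,\,I_{n-k}]$ and $[I_k^{\T},\,E^{\T}]^{\T}$ conjugated by the unitary $[V_*,V_c]$, each of 2-norm $\sqrt{1+\eta^2}$. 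This gives $\|G(E)\|_2\le(1+\eta^2)\bigl(\delta+d\eta/\sqrt{1+\eta^2}\bigr)=(1+\eta^2)\delta+d\eta\sqrt{1+\eta^2}$, and combined with $\|\mathbf{L}^{-1}\|_2=1/g$ this is precisely the invariance $\|\Phi(E)\|_2\le\eta_*$ on the ball $\|E\|_2\le\eta_*$, because $f(\eta_*)=0$.

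On the eigenvalue-ordering step, your instinct to invoke $\zeta$ and Weyl is right but underspecified: a direct Weyl bound on $\wt A(\wt P_*)$ does not by itself produce the $h$. The paper takes the SVD of the fixed point $Z_*$, inserts it into the tangent parameterization, and shows $\min_{Q_*}\|Q_*\wt\Lambda_*Q_*^{\H}-\Lambda_*\|_2\le\delta+d\tau_*+h\tau_*^2$, where the $h\tau_*^2$ term is the $\sin^2\theta$ contamination of the $(1,1)$-block by $\Lambda_c$ under the rotation, and likewise for $\wt\Lambda_c$. The hypothesis $\eta_*<\zeta$ is exactly what makes $2(\delta+d\tau_*+h\tau_*^2)<g$, after which a single Weyl application yields $\lambda_1(\wt\Lambda_c)>\lambda_k(\wt\Lambda_*)$ and hence that $\wt V_*$ carries the bottom $k$ eigenvalues. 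You should make this explicit; as written, the sketch does not establish the separation.
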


\begin{proof}
Let $[V_*, V_c]$ be a unitary matrix such that
\begin{align}\label{vav}
[V_*, V_c]^{\H}A(P_*)[V_*, V_c]=\begin{bmatrix}\Lambda_* & 0 \\ 0 & \Lambda_c\end{bmatrix},
\end{align}
where
\begin{align*}
\Lambda_*=\diag(\lambda_1(A(P_*)),\dots,\lambda_k(A(P_*))),\quad
\Lambda_c=\diag(\lambda_{k+1}(A(P_*)),\dots,\lambda_n(A(P_*))).
\end{align*}
Then that the perturbed NEPv \eqref{eq:nep2} has a solution $\wt{V}_*$
is equivalent to that there exists a unitary matrix $[\wt{V}_*, \wt{V}_c]$ such that
\begin{align}\label{vav2}
[\wt{V}_*, \wt{V}_c]^{\H}\wt{A}(\wt{P}_*)[\wt{V}_*, \wt{V}_c]=\begin{bmatrix}\wt{\Lambda}_* &  0 \\ 0 & \wt{\Lambda}_c\end{bmatrix},
\end{align}
where $\wt{\Lambda}_*$ is Hermitian and its eigenvalues are the $k$ smallest eigenvalues of $\wt{A}(\wt{P}_*)$.

Without loss of generality\footnote{Note that $k\ll n$ and thus $2k\le n$. By the CS decomposition \cite[Chapter 1, Theorem 5.1]{stsu:1990}, we know that there exist unitary matrices $\diag(U_1,U_2)$ and $\diag(U_3,U_4)$ such that
$[\wt{V}_*, \wt{V}_c]=[V_*, V_c] \diag(U_1, U_2)\begin{bmatrix}\Gamma & -\Sigma & 0\\ \Sigma & \Gamma & 0 \\ 0 & 0 & I\end{bmatrix}\diag(U_3,U_4)^{\H}$.
Rewrite $[\wt{V}_*, \wt{V}_c]=[\wt{V}_*, \wt{V}_c] \diag(Q_*^{\H}U_3U_1^{\H}Q_*, Q_c^{\H}U_4U_2^{\H}Q_c)$.
It still holds \eqref{vav2}.
Then \eqref{vvz} follows immediately
by setting $Z=U_2\bsmat \Sigma\Gamma^{-1}\\ 0\esmat U_1^{\H}$.},
 we let
\begin{align}\label{vvz}
[\wt{V}_*, \wt{V}_c]=[V_*, V_c]\begin{bmatrix}I_k & -Z^{\H} \\ Z & I_{n-k}\end{bmatrix}\begin{bmatrix}(I_k+Z^{\H}Z)^{-\frac12} & 0 \\ 0 & (I_{n-k}+ZZ^{\H})^{-\frac12}\end{bmatrix}\diag(Q_*,Q_c),
\end{align}
where $Z\in\mathbb{C}^{(n-k)\times k}$ is a parameter matrix,
 $Q_*\in\mathbb{C}^{k\times k}$ and $Q_c\in\mathbb{C}^{(n-k)\times (n-k)}$ are arbitrary unitary matrices.
Substituting \eqref{vvz} into  \eqref{vav2}, 
we get
\begin{subequations}
\begin{align}
&(I_k+Z^{\H}Z)^{-\frac12}[I_k,Z^{\H}] D\begin{bmatrix}I_k \\ Z\end{bmatrix}(I_k+Z^{\H}Z)^{-\frac12}=Q_*\wt{\Lambda}_*Q_*^{\H},\label{wls}\\
&(I_{n-k}+ZZ^{\H})^{-\frac12}[-Z, I_{n-k}] D\begin{bmatrix}-Z^{\H} \\ I_{n-k}\end{bmatrix}(I_{n-k}+ZZ^{\H})^{-\frac12}=Q_c\wt{\Lambda}_cQ_c^{\H},\label{wlc}\\
&[-Z, I_{n-k}] D\begin{bmatrix}I_k \\ Z\end{bmatrix}=0,\label{con3}
\end{align}
\end{subequations}
where
\begin{equation}\label{d}
D=[V_*, V_c]^{\H}\wt{A}(\wt{P}_*)[V_*, V_c].
\end{equation}
Then the perturbed NEPv \eqref{eq:nep2} has a solution $\wt{V}_*$ is equivalent to
\begin{itemize}
\item[(a)] there exists $Z$ such that \eqref{con3} holds;
\item[(b)] $\lambda_1(\wt{\Lambda}_c) - \lambda_{k}(\wt{\Lambda}_*)>0$.
\end{itemize}
Next, we first prove (a)  then (b).

\medskip

\noindent{\em Proof of $(a)$}
It follows from  \eqref{vav}, \eqref{con3} and \eqref{d} that
\begin{align*}
0&=[-Z, I_{n-k}] [V_*, V_c]^{\H}\wt{A}(\wt{P}_*)[V_*, V_c]\begin{bmatrix}I_k \\ Z\end{bmatrix}\\
&=\Lambda_c Z - Z \Lambda_* +(-ZV_*^{\H}+V_c^{\H})[\wt{A}(\wt{P}_*)-A(P_*)](V_*+V_cZ)\\
&= \mathbf{L}(Z) + \Phi(Z),
\end{align*}
where
\begin{align}
\mathbf{L}(Z)&=\Lambda_c Z - Z \Lambda_*,\notag\\
\Phi(Z)&=  (-ZV_*^{\H}+V_c^{\H})[\wt{A}(\wt{P}_*)-A(P_*)](V_*+V_cZ).\label{phiz}
\end{align}

Note that since $g$ defined in \eqref{g} is positive,  $\mathbf{L}(\cdot)$ is an invertible linear operator with
\begin{equation}\label{linv}
\|\mathbf{L}^{-1}\|_2^{-1}
=\min_{\lambda\in\lambda(\Lambda_*),\tilde{\lambda}\in\lambda(\Lambda_c)}|\lambda-\tilde{\lambda}|
=\lambda_{k+1}(A(P_*))-\lambda_k(A(P_*))=g >0.
\end{equation}
Therefore, we may define a mapping
$\mu: \mathbb{C}^{(n-k)\times k}\rightarrow \mathbb{C}^{(n-k)\times k}$ as
\begin{align}\label{muz}
\mu(Z)\equiv -\mathbf{L}^{-1}(\Phi(Z)).
\end{align}
%
%
By \eqref{vvz}, we have
\begin{align}
\|\wt{P}_* - P_* \|_2&= \|\wt{V}_*\wt{V}_*^{\H} - {V}_*{V}_*^{\rm H}\|_2\notag\\
& = \left\|[V_*, V_c]\begin{bmatrix} I_k \\ Z\end{bmatrix} (I_k+Z^{\H}Z)^{-1} [I_k, Z^{\H}] [V_*, V_c]^{\H} - V_*V_*^{\H}\right\|_2\notag\\
&= \left\|\begin{bmatrix} (I_k+Z^{\H}Z)^{-1}-I_k & (I_k+Z^{\H}Z)^{-1} Z^{\H}\\ Z(I_k+Z^{\H}Z)^{-1} & Z(I_k+Z^{\H}Z)^{-1} Z^{\H}\end{bmatrix}\right\|_2\notag\\
&= \frac{\|Z\|_2}{\sqrt{1+\|Z\|_2^2}}. \label{pp}
\end{align}
Then it follows from \eqref{phiz}, \eqref{da} and \eqref{pp} that
\begin{align}
\|\mathbf{L}^{-1}\Phi(Z)\|_2 &\le \frac{1}{g} (1+\|Z\|_2^2)\big(\delta + d  \|\wt{P}_*-P_*\|_2 \big)\notag\\
&=\frac{1}{g}\big( (1+\|Z\|_2^2)\delta  + d  \|Z\|_2\sqrt{1+\|Z\|_2^2} \big).\label{lphi}
\end{align}
Denote
\[
\mathbb{B}_{\eta_*}=\{Z\; | \; \|Z\|_2\le \eta_*\}.
\]
Note that $\mathbb{B}_{\eta_*}$ is a nonempty bounded closed convex set,
$\mu(\cdot)$ defined in \eqref{muz} is a continuous mapping,
and for any $Z\in\mathbb{B}_{\eta_*}$, by \eqref{lphi} and \eqref{feta}, it holds
\begin{align*}
\|\mu(Z)\|_2\le \frac{1}{g}\big( (1+\eta_*^2)\delta  + d \eta_*\sqrt{1+\eta_*^2} \big)=\eta_*,
\end{align*}
i.e., $\mu(Z)$ maps $\mathbb{B}_{\eta_*}$ into itself.
So by Brouwer's fixed-point theorem \cite{kham:14},
$\mu(Z)=Z$ has a fixed point $Z_*$ in $\mathbb{B}_{\eta_*}$.
In other words, \eqref{con3} has a solution $Z_*\in\mathbb{B}_{\eta_*}$.
This completes the proof of (a).

\medskip

\noindent{\em Proof of $(b)$}
If
\begin{align}\label{wll}
\min_{Q_*^{\H}Q_*=I_k}\|Q_*\wt{\Lambda}_*Q_*^{\H} - \Lambda_*\|_2 +
\min_{Q_c^{\H}Q_c=I_{n-k}}\|Q_c\wt{\Lambda}_cQ_c^{\H}-\Lambda_c\|_2<g,
\end{align}
then by Weyl Theorem \cite{stsu:1990}, we have
$|\lambda_k(\wt{\Lambda}_*)-\lambda_k(\Lambda_*)|
+|\lambda_1(\wt{\Lambda}_c)-\lambda_1(\Lambda_c)|< g$.
Consequently,
\begin{align*}
\lambda_1(\wt{\Lambda}_c)-\lambda_k(\wt{\Lambda}_*)
= g+[\lambda_1(\wt{\Lambda}_c)-\lambda_1({\Lambda}_c)] - [\lambda_k(\wt{\Lambda}_*)-\lambda_k({\Lambda}_*)] > g -g=0.
\end{align*}
Therefore, we only need to show \eqref{wll}, under the assumption $Z\in\mathbb{B}_{\eta_*}$.

We get by \eqref{da},  \eqref{tau}, and \eqref{d} that
\begin{align}
D&=[V_*, V_c]^{\H}{A}({P}_*)[V_*, V_c]+[V_*, V_c]^{\H}[\wt{A}(\wt{P}_*)-A(P_*)][V_*, V_c]\notag\\
&=\begin{bmatrix}\Lambda_* & 0\\ 0 & \Lambda_c\end{bmatrix}
+\Delta D,\label{ddd}
\end{align}
where $\Delta D = [V_*, V_c]^{\H}[\wt{A}(\wt{P}_*)-A(P_*)][V_*, V_c]$ satisfies
\begin{equation}\label{deld}
\|\Delta D\|_2= \|\wt{A}(\wt{P}_*)-A(P_*)\|\le \delta + d\|\wt{P}_*-P_*\|_2
\le \delta+d\tau_*.
\end{equation}

Let the singular value decomposition (SVD) of $Z$ be $Z=U_Z \Sigma_Z V_Z^{\H}$,
where $U_Z\in\mathbb{C}^{(n-k)\times k}$ has orthonormal columns,
$\Sigma_Z=\begin{bmatrix}\widehat{\Sigma}\\0\end{bmatrix}$,
$\widehat{\Sigma}=\diag(\sigma_1,\dots,\sigma_k)$, $\sigma_1\ge \dots\ge \sigma_k\ge 0$,
and $V_Z\in\mathbb{C}^{k\times k}$ is unitary.
Let $\sigma_i=\tan\theta_i$ for $i=1,\dots, k$, $\widehat{C}=\diag(\cos\theta_1,\dots,\cos\theta_k)$,
$\widehat{S}=\diag(\sin\theta_1,\dots,\sin\theta_k)$.
Then using \eqref{wls}, \eqref{ddd}, \eqref{deld}, we have
\begin{align}
\min_{Q_*^{\H}Q_*=I_k}\|Q_*\wt{\Lambda}_*Q_*^{\H}-\Lambda_*\|_2
&=\min_{Q_*^{\H}Q_*=I_k}\left\|Q_*V_Z[\widehat{C},\widehat{S}, 0] D \begin{bmatrix}\widehat{C}\\ \widehat{S}\\ 0\end{bmatrix} V_Z^{\H}Q_*^{\H}-\Lambda_*  \right\|_2\notag\\
&\le \left\|[\widehat{C},\widehat{S}, 0] D \begin{bmatrix}\widehat{C}\\ \widehat{S}\\ 0\end{bmatrix} -\Lambda_* \right\|_2\notag\\
&\le \|\Delta D\|_2 + \left\|\widehat{C}\Lambda_*\widehat{C}+[\widehat{S},0]\Lambda_c\begin{bmatrix}\widehat{S}\\ 0\end{bmatrix} - \Lambda_*\right\|_2\notag\\
&\le \delta + d\tau_* +h\sin^2\theta_1\notag\\
&\le \delta + d\tau_* + h\tau_*^2.\label{ll1}
\end{align}
Similarly,
\begin{align}
\min_{Q_c^{\H}Q_c=I_{n-k}}\|Q_c\wt{\Lambda}_cQ_c^{\H}-\Lambda_c\|_2
\le \delta + d\tau_* + h\tau_*^2.\label{ll2}
\end{align}

Direct calculations give rise to
\begin{subequations}\label{sll}
\begin{align}
2[\delta + d\tau_* + h\tau_*^2]-g
&= 2\left(\delta+ d \frac{\eta_{*}}{\sqrt{1+\eta_{*}^2}}\right) + 2h \frac{\eta_*^2}{1+\eta_*^2}-g\notag\\
&=2g\frac{\eta_{*}}{1+\eta_{*}^2} +2h\frac{\eta_*^2}{1+\eta_*^2}-g \label{g1} \\
&<2g\frac{\zeta}{1+\zeta^2} +2h\frac{\zeta^2}{1+\zeta^2}-g\label{g2}\\
&=\frac{2h\zeta^2-g(1-\zeta)^2}{1+\zeta^2}\notag\\
& =0,\label{g3}
\end{align}
\end{subequations}
where \eqref{g1} uses the fact $\eta_*$ is a root of \eqref{feta},
\eqref{g2} uses $\eta_*<\zeta$,
\eqref{g3} uses \eqref{gzeta}.
Combining \eqref{ll1}, \eqref{ll2} and \eqref{sll}, we get $(b)$.
This completes the proof.
\end{proof}

Note that $g>d$ is a necessary condition for that $f(\eta)=0$ has positive roots. Otherwise, $f(\eta)$ is always negative, and hence, $f(\eta)=0$ has no roots.
Next, we have several remarks in order.

\begin{remark}{\rm~
When the perturbation is sufficiently small, i.e., $\delta\ll 1$,
we have the following two claims: \\
(1) The assumption of Theorem~\ref{thm2} is weaker than that of Theorem~\ref{thm1}.\\
(2) The perturbation bound of Theorem~\ref{thm2} is shaper than that of Theorem~\ref{thm1}.\\
Claim (1) can be verified as follows.
Let the perturbation $\delta$ is sufficiently small and less than $\frac{1}{2}(g-d)\zeta$, we have
\begin{equation}\label{f2d}
f(\frac{2\delta}{g-d})=\frac{2g\delta}{g-d}- \frac{2d\delta}{g-d} -\delta + \mathcal{O}(\delta^2)=\delta+\mathcal{O}(\delta^2)>0.
\end{equation}
Note that $f(0)=-\delta<0$.
Therefore, $f(\eta)=0$ has at least one positive root within interval $(0,\frac{2\delta}{g-d})\subset (0, \zeta)$.
In other words, the assumption of Theorem~\ref{thm2}, which requires $f(\eta)=0$ has a positive root within $(0, \zeta)$,
is satisfied if $g>d$, provided that the perturbation is sufficiently small.
For the assumption of Theorem~\ref{thm1}, no matter how small the perturbation $\delta$ is, it requires $g>2d$.
Claim (2) can be verified as follows.
Using the  second order Taylor's expansion of $\sqrt{1+x}=1+\frac{1}{2}x-\frac{1}{8}x^2+\mathcal{O}(x^3)$,  we have by calculations,
\begin{align*}
f(\frac{\xi_*}{\sqrt{1-\xi_*^2}})
=\frac{g\delta^2}{(g-d)^2}+\mathcal{O}(\delta^3).
\end{align*}
Thus, $f(\frac{\xi_*}{\sqrt{1-\xi_*^2}})>0$ since $\delta\ll 1$.
Also note that $f(0)<0$, we know $\eta_*<\frac{\xi_*}{\sqrt{1-\xi_*^2}}$,
which leads to $\frac{\eta_*}{\sqrt{1+\eta_*^2}}<\xi_*$.
}\end{remark}

\begin{remark}{\rm
Note that $h>g$, then $\zeta$ defined in Theorem~\ref{thm2} is less than $\frac{1}{1+\sqrt{2}}$,
and $\tau_*$ is less than $\frac{1}{\sqrt{1+(1+\sqrt{2})^2}}\approx 0.3827$.
Therefore, when $\delta$ is not sufficiently small, Theorem~\ref{thm2} may not be applicable
since $\|\sin\Theta(\mathcal{R}(V_*),\mathcal{R}(\wt{V}_*))\|_2$ can be larger than 0.3827,
meanwhile Theorem~\ref{thm1} can be still applicable as long as $g>2d$.
}\end{remark}

\begin{remark}{\rm
Consider the following perturbation problem of a Hermitian matrix:
Given a Hermitian matrix $A_0$, a perturbation matrix $\Delta A_0$, which is also Hermitian.
Let  the eigenvalues of $A_0$ be $\lambda_1\le \dots \le \lambda_n$,
the column vectors of $V_*$ and $\wt{V}_*$ be  the eigenvectors of $A_0$ and $A_0+\Delta A_0$ associated with their $k$ smallest eigenvalues, respectively.
Assume $g=\lambda_{k+1}-\lambda_k>0$.
What's the upper bound for $\|\sin\Theta(\mathcal{R}(V_*),\mathcal{R}(\wt{V}_*))\|_2$?

Note that since $d=0$, \eqref{feta} becomes a quadratic equation of $\eta$.
It is easy to see that it has positive roots if and only if $g\ge 2\delta$.
And when $g\ge 2\delta$, it has two positive roots, and the smaller one is $\frac{2\delta}{g+\sqrt{g^2-4\delta^2}}$.
Then Theorem~\ref{thm2} can be rewritten as:
\begin{center}
{\em \mbox{\rm If} $\delta\le \frac{1}{2}g$ \mbox{\rm and} $\frac{2\delta}{g+\sqrt{g^2-4\delta^2}}<\zeta$,
{\rm then} $\|\tan\Theta(\mathcal{R}(V_*),\mathcal{R}(\wt{V}_*))\|_2\le \frac{2\delta}{g+\sqrt{g^2-4\delta^2}}$.}
\end{center}
This conclusion is similar to the perturbation theorems in \cite[Chapter V, subsection 2.2]{stsu:1990}.
}\end{remark}

\subsection{Condition number}
In this subsection, we provide  a condition number for NEPv \eqref{eq:nep}.
Recall the theory of condition developed by Rice \cite{rice:1966}, also note that
\[
\frac{\|P_*-\wt{P}_*\|_2}{\|P_*\|_2}=\|\sin\Theta(\mathcal{R}(V_*),\mathcal{R}(\wt{V}_*))\|_2.
\]
We may define a condition number as
\begin{align}
\kappa=\lim_{\epsilon\to 0}\Big\{\frac{\|\sin\Theta(\mathcal{R}(V_*),\mathcal{R}(\wt{V}_*))\|_2}{\epsilon} \; \big| \; \delta\le \epsilon,
\mbox{$V_*$, $\wt{V}_*$  are the solutions to \eqref{eq:nep} and \eqref{eq:nep2}}, \\
\mbox{respectively, $\delta$ is defined in \eqref{ad}} \Big\}.\notag
\end{align}

Now using the second-order Taylor's expansion of $(1+x)^{1/2}$, by \eqref{xistar}, we have
\begin{align}\label{xistar_Taylor}
\xi_*= \frac{1}{g-d}\delta+O(\delta^2).
\end{align}
Combining it with Theorem~\ref{thm1}, we can obtain the first order absolute perturbation bound for the eigenvector subspace  $V_*$:
\begin{align}
\|\sin\Theta(\mathcal{R}(V_*),\mathcal{R}(\wt{V}_*))\|_2\le \frac{1}{g-d}\delta+O(\delta^2).
\end{align}
Then it follows
\[
\frac{\|\sin\Theta(\mathcal{R}(V_*),\mathcal{R}(\wt{V}_*))\|_2}{\epsilon}\lesssim \frac{1}{g-d}.
\]
Therefore, we may define a condition number for NEPv  \eqref{eq:nep} as
\begin{align}\label{condnum}
\kappa\equiv \frac{1}{g-d}.
\end{align}
This form can also be derived from Theorem~\ref{thm2}.
In fact, letting $\delta\rightarrow 0$ in \eqref{feta}, by \eqref{f2d}, we know that $\eta_*$ is less than $\frac{2\delta}{g-d}$, thus, $\eta_*\rightarrow 0$.
Then \eqref{feta} can be rewritten as
\[
g\eta-d\eta+\delta\approx 0.
\]
Therefore, $\eta_*\approx \frac{\delta}{g-d}$, and $\frac{\eta_*}{\sqrt{1+\eta_*^2}}\approx \frac{\delta}{g-d}$.
Thus, by Theorem~\ref{thm2}, we have
\[
\|\sin\Theta(\mathcal{R}(V_*),\mathcal{R}(\wt{V}_*))\|_2\lesssim \frac{1}{g-d}\delta,
\]
 from which we may define a condition number as in \eqref{condnum}.

\medskip

Recall that $g$ is the gap between the $k$th and $k+1$st smallest eigenvalues of $A(P_*)$,
and $d$ is a local Lipschitz constant for the inequality $\|A(P)-A(P_*)\|_2\le d \|P-P_*\|_2$.
Thus, the newly defined condition number $\kappa$,
which can be used to measure the sensitivity of NEPv at $V_*$,
depends on the eigenvalue gap as well as the sensitivity of $A(P)$ at $P=P_*$.
A large $g$ and a small $d$ will ensure a good conditioned  NEPv \eqref{eq:nep}.

\medskip

\begin{remark}\label{rem:pertbd}{\rm
Notice that $\delta$ can be used to measure the magnitude of the backward error (see \eqref{backerr} below).
Then using the  rule of thumb -- ``forward error $\lesssim$ backward error $\times$ condition number'',
we may use $\frac{\delta}{g-d}$ as an approximate perturbation bound.
}\end{remark}

\subsection{Error bounds} In this subsection we give two error bounds for NEPv \eqref{eq:nep},
which can be used to measure the quality of approximate solutions to NEPv \eqref{eq:nep}.

Let $\widehat{V}\in\mathbb{V}_k$ be an approximate solution to NEPv \eqref{eq:nep}, and denote the residual by
\begin{equation}\label{eq:r}
R=A(\widehat{P})\widehat{V} - \widehat{V} [ \widehat{V}^{\H}A(\widehat{P})\widehat{V}],
\end{equation}
where $\widehat{P}=\widehat{V}\widehat{V}^{\H}\in\mathbb{P}_k$.
It is easy to verify that \eqref{eq:r} can be rewritten as
\begin{align}\label{backerr}
\widehat{A}(\widehat{P})\widehat{V}=\widehat{V} [ \widehat{V}^{\H}\widehat{A}(\widehat{P})\widehat{V}],
\end{align}
where
\begin{align*}
\widehat{A}(\widehat{P}) &= A_0+\Delta A_0 + A_1(\widehat{P})+A_2(\widehat{P}),\\
\Delta A_0 &= - R\widehat{V}^{\H} - \widehat{V} R^{\H}.
\end{align*}
Now we take \eqref{eq:nep} as a perturbed NEPv of \eqref{backerr},
where only the constant matrix $A_0$ is perturbed, the matrix functions $A_1$ and $A_2$ remain unchanged.
Noticing that $\delta_0= \|R\widehat{V}^{\H} + \widehat{V} R^{\H}\|_2=\|R\|_2$, $\delta_1=\delta_2=0$ and $\delta= \|R\|_2$,
we can rewrite Theorems~\ref{thm1} and \ref{thm2}  as the following two corollaries.

\begin{corollary}\label{cor1}
Let $\widehat{V}$ be an approximate solution to NEPv \eqref{eq:nep}, $\widehat{P}=\widehat{V}\widehat{V}^{\H}$, $R$ be given by \eqref{eq:r}.
Define $\hat{d}$ as $d$ in \eqref{ad} by replacing $P_*$ by $\widehat{P}$, and
assume
\begin{equation}\label{hatg}
\hat{g}=\lambda_{k+1}(\widehat{A}(\widehat{P})) -\lambda_{k}(\widehat{A}(\widehat{P}))>0.
\end{equation}
If
\begin{equation}\label{rgd}
\|R\|_2< \frac12 \hat{g} - \hat{d},
\end{equation}
then NEPv \eqref{eq:nep} has a solution ${V}_*\in\mathbb{V}_{\hat{\xi}_*}$ with
\begin{align}\label{xistar2}
\hat{\xi}_*= \frac{2\|R\|_2}{\hat{g} -\hat{d} - \|R\|_2 + \sqrt{(\hat{g} - \hat{d} - \|R\|_2)^2-4\hat{d}\|R\|_2}}.
\end{align}
\end{corollary}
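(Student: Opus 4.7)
The plan is to reinterpret this corollary as a direct specialization of Theorem~\ref{thm1} by swapping which NEPv is treated as ``original'' and which as ``perturbed.'' The author has already set this up in \eqref{backerr}: the approximate solution $\widehat{V}$ is an \emph{exact} solution of a nearby NEPv governed by $\widehat{A}(\widehat{P})$, which differs from $A(\widehat{P})$ only in the constant term, with $\Delta A_0 = -R\widehat{V}^{\H} - \widehat{V}R^{\H}$. Viewing \eqref{backerr} as the unperturbed NEPv whose exact solution is $\widehat{V}$, and viewing \eqref{eq:nep} as its perturbation, I would apply Theorem~\ref{thm1} with $V_*$ (in the statement of Theorem~\ref{thm1}) replaced by $\widehat{V}$, $P_*$ replaced by $\widehat{P}$, and $\wt{V}_*$ replaced by the sought-after $V_*$ of the current corollary.

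The next step is to match the perturbation parameters. Because only the constant term is perturbed, we have $\delta_1=\delta_2=0$ in the notation of \eqref{ad}, so $\delta = \delta_0 = \|\Delta A_0\|_2$. The gap $g$ of Theorem~\ref{thm1} is evaluated at the ``unperturbed'' solution, which here is $\widehat{P}$, yielding $\hat{g}$; likewise the local Lipschitz constant $d$ centered at $\widehat{P}$ becomes $\hat{d}$ by the definition given in the statement. With these identifications, hypothesis \eqref{con2} translates exactly into \eqref{rgd}, and the bound \eqref{xistar} specializes verbatim to \eqref{xistar2}. Theorem~\ref{thm1} then delivers the desired $V_*\in\mathbb{V}_{\hat{\xi}_*}$.

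The one piece that is not pure bookkeeping is verifying $\|\Delta A_0\|_2 = \|R\|_2$, i.e., that $\delta_0=\|R\|_2$. The key is the Galerkin orthogonality $\widehat{V}^{\H}R = 0$, which follows immediately from \eqref{eq:r}. Completing $\widehat{V}$ to a unitary $[\widehat{V},V_{\perp}]$ and writing $R = V_{\perp}(V_{\perp}^{\H}R)$, the Hermitian matrix $\Delta A_0 = -(R\widehat{V}^{\H}+\widehat{V}R^{\H})$ becomes block off-diagonal in this basis with off-diagonal block $V_{\perp}^{\H}R$, whose $2$-norm equals $\|R\|_2$. This is the only computation that is not a notational substitution, and I expect no substantial obstacle beyond carefully tracking which symbols play the role of ``tilde'' versus ``hat'' in the reduction to Theorem~\ref{thm1}.
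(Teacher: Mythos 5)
Your proposal matches the paper's own argument exactly: the authors also take \eqref{eq:nep} as a perturbation of \eqref{backerr} (only $A_0$ perturbed, so $\delta_1=\delta_2=0$ and $\delta=\|\Delta A_0\|_2=\|R\|_2$), re-center the ``unperturbed'' solution at $\widehat V$, and read off Theorem~\ref{thm1} with $g\to\hat g$ and $d\to\hat d$. Your extra step verifying $\|\Delta A_0\|_2=\|R\|_2$ via the Galerkin orthogonality $\widehat V^{\H}R=0$ and the block-off-diagonal structure of $R\widehat V^{\H}+\widehat V R^{\H}$ is correct and is precisely what the paper asserts without detail.
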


\begin{corollary}\label{cor2}
Let $\widehat{V}$ be an approximate solution to NEPv \eqref{eq:nep}, $\widehat{P}=\widehat{V}\widehat{V}^{\H}$, $R$ be given by \eqref{eq:r}.
Assume \eqref{hatg}, define $\hat{d}$ as in Corollary~\ref{cor1}, and denote
\begin{equation}
\hat{h}=\max_{1\le j\le k} [\lambda_{k+j}(\widehat{A}(\widehat{P}))  -  \lambda_j(\widehat{A}(\widehat{P}))],
\qquad
\hat{\zeta}=\frac{\sqrt{\hat{g}}}{\sqrt{\hat{g}}+\sqrt{2\hat{h}}}.
\end{equation}
Suppose that $\|R\|_2$ is sufficiently small such that
\begin{equation}\label{fhateta}
\hat{f}(\eta)\equiv \hat{g} \eta - \hat{d}\eta\sqrt{1+\eta^2} - (1+\eta^2)\|R\|_2 = 0
\end{equation}
has positive roots, and its smallest positive root, denoted by $\hat{\eta}_*$, is smaller than $\hat{\zeta}$.
Then the NEPv \eqref{eq:nep} has a solution ${V}_*\in\mathbb{V}_{\hat{\tau}_*}$
with
\begin{equation}\label{tau2}
\hat{\tau}_*=\frac{\hat{\eta}_*}{\sqrt{1+\hat{\eta}_*^2}}.
\end{equation}
\end{corollary}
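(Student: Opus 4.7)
The plan is to recognize Corollary~\ref{cor2} as a direct specialization of Theorem~\ref{thm2} once we swap the roles of the ``exact'' and ``perturbed'' NEPv via the backward-error reinterpretation already set up just before the statement. Specifically, the identity \eqref{backerr} says that $\widehat{V}$ is an \emph{exact} solution to the NEPv whose coefficient function is $\widehat{A}(P)=A_0+\Delta A_0+A_1(P)+A_2(P)$, where $\Delta A_0=-R\widehat{V}^{\H}-\widehat{V}R^{\H}$. Thus, relative to this auxiliary NEPv, the \emph{actual} NEPv \eqref{eq:nep} is the perturbed one: the constant term drops from $A_0+\Delta A_0$ back to $A_0$, while $A_1$ and $A_2$ are unchanged.

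First I would translate the quantities in \eqref{ad} to this setting. The backward perturbation of the constant term has norm $\delta_0=\|\Delta A_0\|_2=\|R\widehat{V}^{\H}+\widehat{V}R^{\H}\|_2=\|R\|_2$ (since $\widehat{V}^{\H}\widehat{V}=I_k$ makes this a polar-like decomposition whose $2$-norm equals $\|R\|_2$, as already asserted in the paragraph preceding the corollary). Because $A_1$ and $A_2$ are identical in both NEPvs, we get $\delta_1=\delta_2=0$, and hence $\delta=\|R\|_2$. The Lipschitz-type constants $d_1,d_2$ from \eqref{ad} are computed around the \emph{exact} solution of the auxiliary NEPv, which here is $\widehat{V}$, so they are precisely the quantities $\hat d_1,\hat d_2$ (summing to $\hat d$) described in Corollary~\ref{cor1}. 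Similarly the gap $g$ at the auxiliary exact solution becomes $\hat g$ defined in \eqref{hatg}, and the spread $h$ becomes $\hat h$.

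Next I would simply invoke Theorem~\ref{thm2} with $A\leftrightarrow\widehat{A}$, $V_*\leftrightarrow\widehat{V}$, $P_*\leftrightarrow\widehat{P}$, $g\leftrightarrow\hat g$, $h\leftrightarrow\hat h$, $d\leftrightarrow\hat d$, $\delta\leftrightarrow\|R\|_2$, so that the equation $f(\eta)=0$ in \eqref{feta} becomes exactly $\hat f(\eta)=0$ in \eqref{fhateta}, and $\zeta$ becomes $\hat\zeta$. The hypotheses of Corollary~\ref{cor2} then coincide verbatim with those of Theorem~\ref{thm2} in this relabeled setting: $\hat f(\eta)=0$ has a smallest positive root $\hat\eta_*<\hat\zeta$. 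Theorem~\ref{thm2} therefore guarantees a solution of the perturbed NEPv---here, the original NEPv \eqref{eq:nep}---namely some $V_*\in\mvk$ with
\[
\|\sin\Theta(\mathcal{R}(\widehat{V}),\mathcal{R}(V_*))\|_2\le\frac{\hat\eta_*}{\sqrt{1+\hat\eta_*^2}}=\hat\tau_*,
\]
which is exactly the conclusion $V_*\in\mathbb{V}_{\hat\tau_*}$.

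The only non-mechanical step is the verification $\|\Delta A_0\|_2=\|R\|_2$, which is the main substantive content and the one place where the ``approximate NEPv'' has to be identified cleanly as an exact NEPv so that Theorem~\ref{thm2} applies with the roles of exact and perturbed problems interchanged. Once this backward-error viewpoint is in place, everything else is bookkeeping: matching the hats on $g$, $h$, $d$, $\zeta$, $\eta_*$, $\tau_*$ to the notation in Theorem~\ref{thm2}, and quoting the theorem. I do not anticipate any hidden technical obstacle beyond confirming that the local Lipschitz suprema in \eqref{ad} are formed over the correct neighborhood $\mpxi$ centered at $\widehat{P}$---which is precisely what the definition of $\hat d$ in Corollary~\ref{cor1} stipulates.
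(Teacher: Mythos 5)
Your proof matches the paper's argument exactly: the authors set up the backward-error reinterpretation $\widehat{A}(\widehat{P})\widehat{V}=\widehat{V}[\widehat{V}^{\H}\widehat{A}(\widehat{P})\widehat{V}]$ with $\Delta A_0=-R\widehat{V}^{\H}-\widehat{V}R^{\H}$, observe that $\delta_0=\|R\|_2$, $\delta_1=\delta_2=0$ (using $\widehat{V}^{\H}R=0$ for the norm identity), and then state Corollary~\ref{cor2} as a direct relabeling of Theorem~\ref{thm2} with the roles of exact and perturbed NEPv interchanged. No new argument is needed and none is given in the paper either, so your proposal is correct and takes the same route.
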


It is worth mentioning here that both \eqref{xistar2} and \eqref{tau2} are computable as long as $\hat{g}$ and $\hat{d}$ are available.

\begin{remark}\label{rem:errbd}{\rm
By \eqref{backerr}, we can use $\delta=\|\Delta A_0\|_2=\|R\|_2$ to measure the magnitude of the backward error.
Recall the condition number $\kappa$ we defined in \eqref{condnum} and
the thumb rule, we may use $\frac{\|R\|_2}{\hat{g}-\hat{d}}$ as an approximate error bound,
where $\hat{g}$ is given by \eqref{hatg}.
}\end{remark}

\section{Applications}\label{sec:appl}
In this section,  we apply our theoretical results to two  practical problems:
the Kohn-Sham equation and the trace ratio optimization.
All numerical experiments are carried out 
using {\tt MATLAB} R2016b, with machine epsilon $\epsilon\approx 2.2\times 10^{-16}$.

The exact solution $V_*$ to NEPv \eqref{eq:nep}  is approximated by $\widehat{V}_*$, which is obtained by solving NEPv \eqref{eq:nep} via  SCF iteration
with stopping criterion
\[
\frac{\|A(\widehat{V}_*\widehat{V}_*^{\H})\widehat{V}_*-\widehat{V}_* [\widehat{V}_*^{\H} A(\widehat{V}_*\widehat{V}_*^{\H})\widehat{V}_*]\|_2}{\|A(\widehat{V}_*\widehat{V}_*^{\H})\|_2}\le 10^{-14}.
\]
And the exact solution $\wt{V}_*$ to NEPv \eqref{eq:nep2} is approximated similarly.
At the $l$th SCF iteration, an approximate solution $V_l$ is obtained.
Then we can use $V_l$ to validate our error bounds,
which will tell us how far away the approximate solution $V_l$ from the exact solution $V_*$.

The following notations will be used to illustrate our results.
The solution perturbation $\|\sin\Theta(\mathcal{R}(V_*),\mathcal{R}(\wt{V}_*))\|_2$, the perturbation bound given by Theorems~\ref{thm1} and \ref{thm2},
and Remark~\ref{rem:pertbd} are denoted by $\chi_*$, $\xi_*$, $\tau_*$ and $\gamma_*$, respectively.
For the approximate solution $V_l$, the solution error $\|\sin\Theta(\mathcal{R}(V_*),\mathcal{R}(V_l))\|_2$ and the error bounds given by Corollaries~\ref{cor1}, \ref{cor2} and Remark~\ref{rem:errbd}
are denoted by $\hat{\chi}_*$, $\hat{\xi}_*$, $\hat{\tau}_*$ and $\hat{\gamma}_*$, respectively.

\subsection{Application to the Kohn-Sham equation}\label{s:ks}
We consider the perturbation of the discretized KS equation:
\begin{align}\label{e:KS}
H(V)V=V\Lambda,
\end{align}
where $V\in\mathbb{R}^{n\times k}$ is orthonormal,
the discretized Hamiltonian $H(V)\in\mathbb{R}^{n\times n}$ is a matrix function with respect to $V$,
and $\Lambda\in\mathbb{R}^{k\times k}$ is a diagonal matrix consisting of $k$ smallest eigenvalues of $H(V)$.
In particular, we consider the discretized Hamiltonian in the  form of
\begin{equation}\label{e:HX}
H(V)= \frac{1}{2}L+V_{\rm ion}+{\rm Diag}(L^\dag \rho)-2\gamma{\rm Diag}(\rho^{\frac{1}{3}}),
\end{equation}
where $L$ is a finite dimensional representation of the Laplacian operator,
$V_{\rm ion}$ is the ionic pseudopotentials sampled on the suitably chosen Cartesian grid,
$L^\dag$ denotes the pseudoinverse of $L$,
$\rho={\rm diag}(VV^{\T})$ denotes the vector containing the diagonal elements of the matrix $VV^{\T}$,
and $\Diag(x)$ denotes a diagonal matrix with $x$ on its diagonal.
The last term of \eqref{e:HX} is derived from $e_{xc}(\rho)$ defined in \cite[equation (2.11)]{liww:15}.

Let
\begin{align*}
A_0= \frac{1}{2}L+V_{\rm ion}, \quad
A_1(P)= {\rm Diag}(L^\dag \rho(P)),\quad
A_2(P)=-2\gamma {\rm Diag}( \rho(P)^{\frac{1}{3}}),
\end{align*}
where $P= VV^{\T}$. Then the discretized Hamiltonian  $H(V)$ can be rewritten as
\[
A(P)=A_0+A_1(P)+A_2(P).
\]
Thus, the KS equation \eqref{e:KS} with $H(V)$ given by \eqref{e:HX} can be written in the form of \eqref{eq:nep} with \eqref{eq:a012}, indeed.

Next, we set the perturbed KS equation as in the form \eqref{eq:nep2} with
\begin{eqnarray*}
\wt{A}_0&:= &\frac{1}{2}L+V_{\rm ion}+\Delta L+\Delta V_{\rm ion},\\
\wt{A}_1(\wt{P}_*)&:= &{\rm Diag}((L+\Delta L)^\dag \rho(\wt{P}_*)),\\
\wt{A}_2(\wt{P}_*)&:= &-2\gamma {\rm Diag}( \rho(\wt{P}_*)^{\frac{1}{3}}).
\end{eqnarray*}
Then according to \eqref{ad}, we have
\begin{align*}
\delta_0  &= \| \Delta L+\Delta V_{\rm ion}\|_2,  \\
\delta_1  &=\sup_{P\in\mpxi} \|{\rm Diag}((L+\Delta L)^\dag - L^\dag)\rho(P)\|_2,\\
\delta_2  &= 0,\\
d_1  &=\sup_{P\ne P_*,P\in\mpxi} \frac{\|{\rm Diag}((L+\Delta L)^\dag \rho(P)-L^\dag \rho(P_*))\|_2}{\|P-P_*\|_2},\\
d_2  &= 2\gamma \sup_{P\ne P_*,P\in\mpxi} \frac{\|{\rm Diag}( \rho(P)^{\frac{1}{3}}- \rho(P_*)^{\frac{1}{3}})\|_2}{\|P-P_*\|_2}.
\end{align*}
In our numerical tests, $L$, $V_{\rm ion}$, $\Delta L$ and $\Delta V_{\rm ion}$  are generated
by using the {\tt MATLAB} built-in functions {\tt eye}, {\tt diag}, {\tt ones}, {\tt zeros}, and {\tt sprandsym} as follows:
\begin{eqnarray*}
&& L=\texttt{eye}(n)-\texttt{diag}(\texttt{ones}(n-1,1),1); \quad  L=(L+L')/h^2; \\
&& V_{\rm ion}= {\tt zeros}(n);\\
&& \Delta L = \epsilon_1*L; \\
&& \Delta V_{\rm ion}=\epsilon_2*\texttt{sprandsym}(n,0.5).
\end{eqnarray*}
Here $n$ is the matrix size, $h$ denotes the step size,  $\epsilon_1$,$\epsilon_2$ are two parameters used to control the magnitude of the perturbation.

%


Set $n=50$, $k=8$, $\epsilon_1=\epsilon_2=\epsilon=10^{-j}$ with $j=3,4,\dots, 12$.
In Figure \ref{ks-figh5678},  we plot $\chi_*$, $\xi_*$ and $\tau_*$ versus $\epsilon$ for four different step sizes $h=0.05,0.06,0.07,0.08$.
In Table \ref{ks-tableh5678}, we lists  $\frac{g}{d}$, $\frac{1}{g-d}$, $\chi_*$, $\xi_*$, $\tau_*$, and $\gamma_*$ for different $\epsilon$.
We can observe that the perturbation bounds $\xi_*$, $\tau_*$ and $\gamma_*$ are good upper bounds for the solution perturbation $\chi_*$,  while $\tau_*$ is sharper,
especially when $\frac{g}{d}$ is close to one.
And as $h$ increases, $\frac{g}{d}$ decreases, the condition number $\frac{1}{g-d}$ increases, and as a result, the perturbation bounds become less sharp.
Also note that, when $h=0.08$, the assumption of Theorem~\ref{thm1}  does not hold since $\frac{g}{d}<2$,
thus, $\xi_*$ is no longer available (denoted by ``-'' in Table~\ref{ks-tableh5678}) and
we can only use Theorem~\ref{thm2} in this case.

\begin{figure}[ht]
\centering
\includegraphics[width=0.49\textwidth]{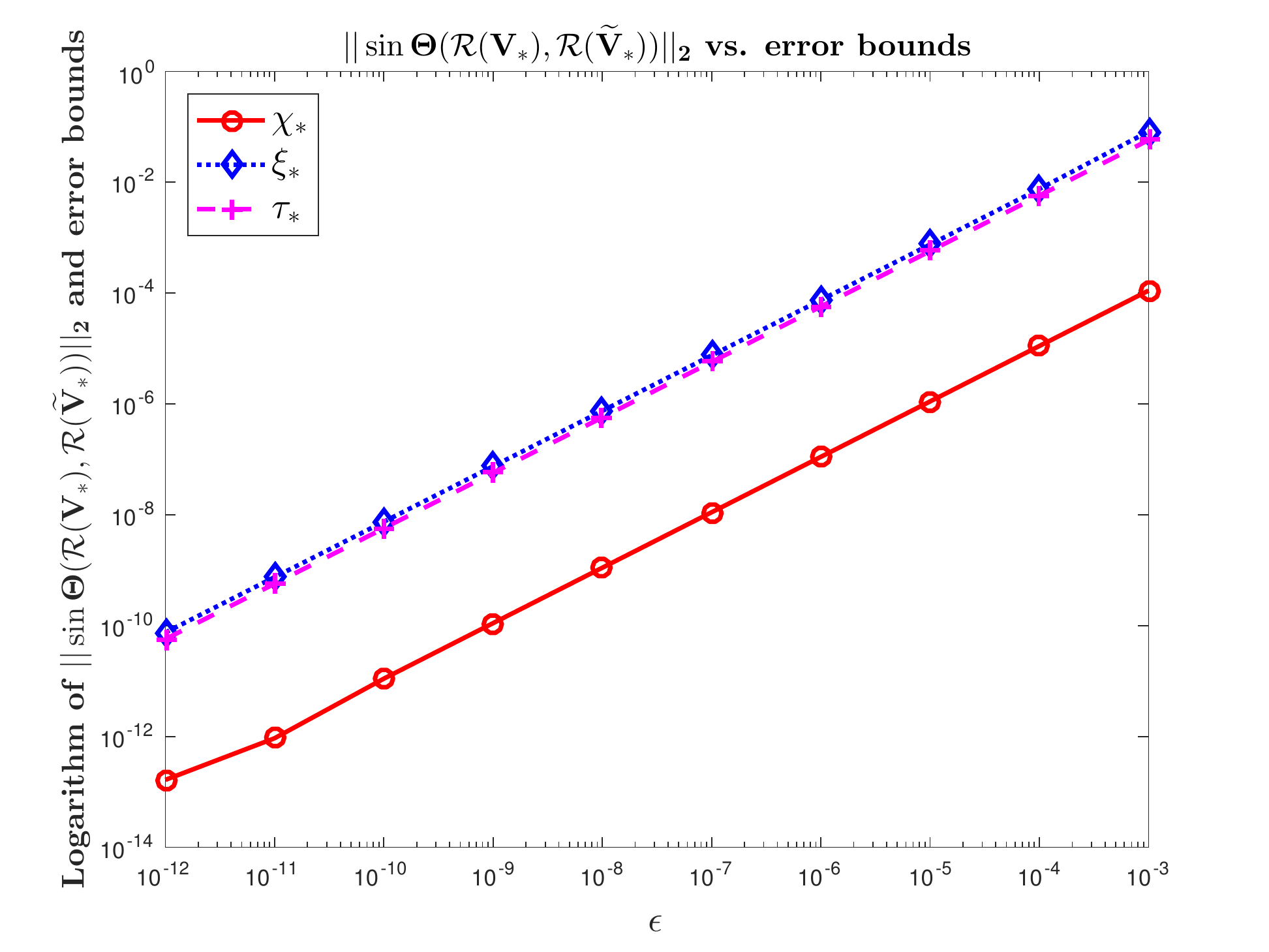}
\includegraphics[width=0.49\textwidth]{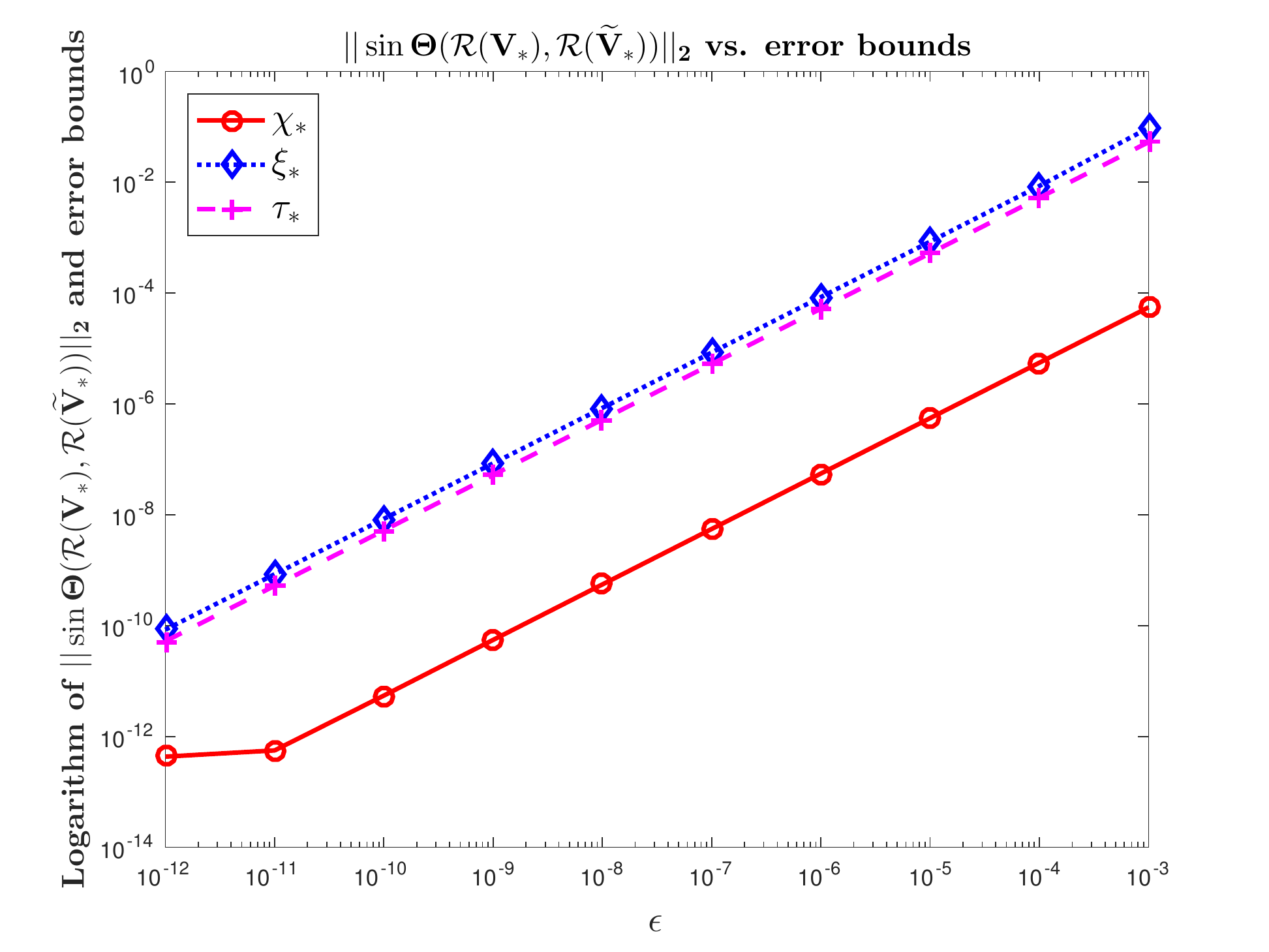}\\
        \small{$h=0.05$}\hspace{2.1in}
        \small{$h=0.06$}\\
\includegraphics[width=0.49\textwidth]{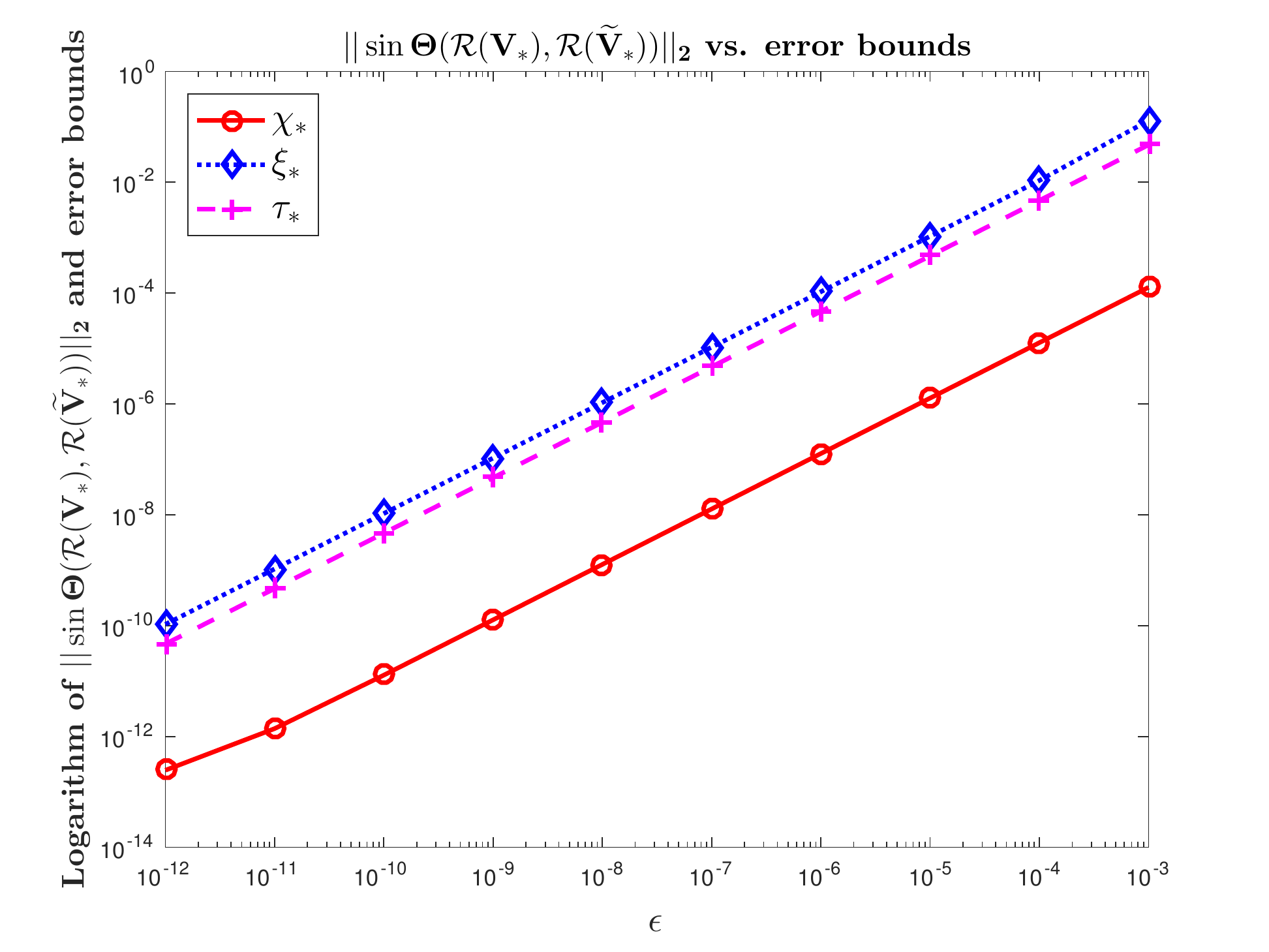}
\includegraphics[width=0.49\textwidth]{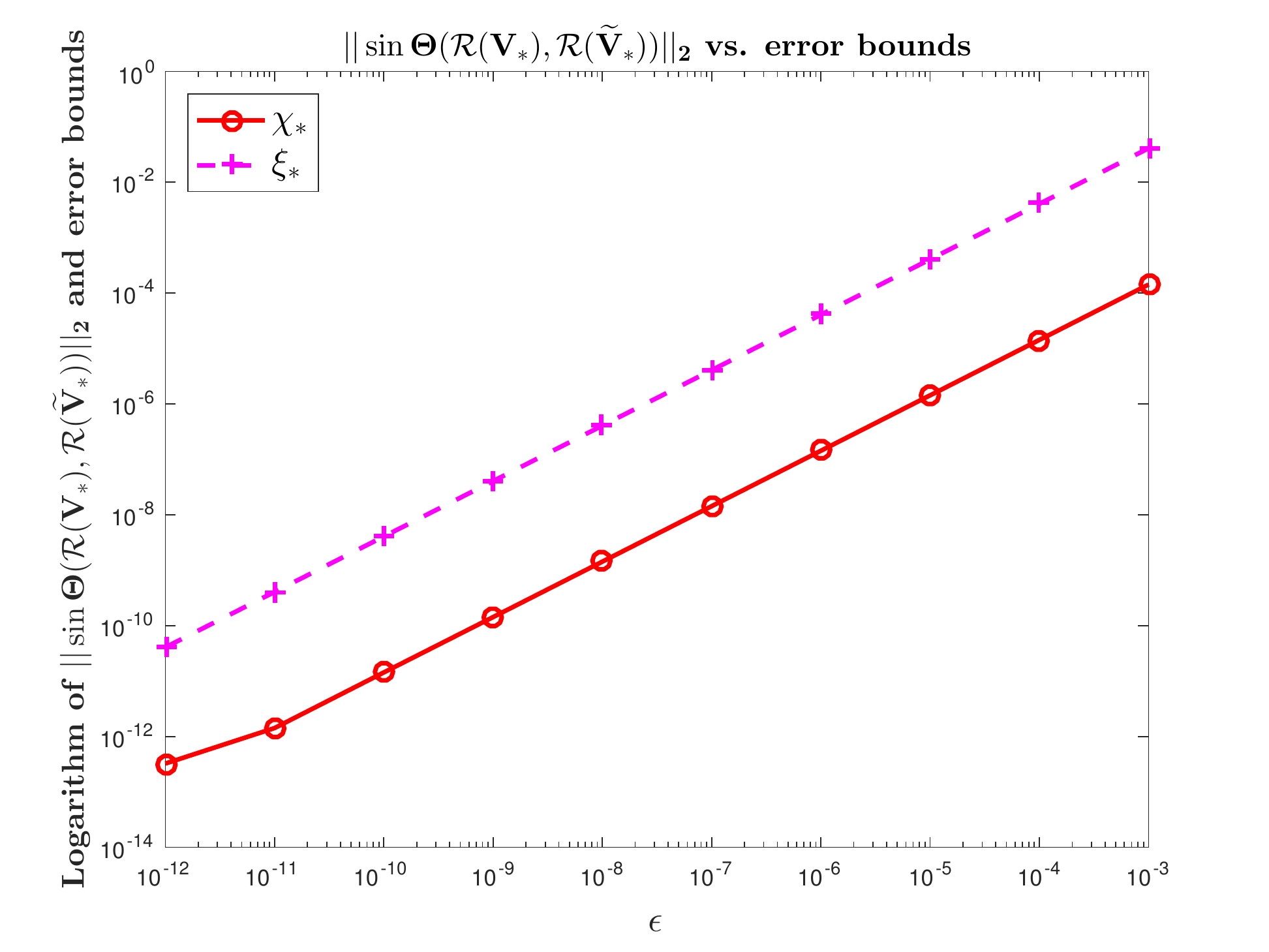}\\
        \small{$h=0.07$}\hspace{2.1in}
        \small{$h=0.08$}\\
\caption{$\|\sin\Theta(\mathcal{R}(V_*),\mathcal{R}(\wt{V}_*))\|_2$ vs. perturbation bounds  for the KS equation 
}
 \label{ks-figh5678}
\end{figure}

\begin{table}\renewcommand{\arraystretch}{1.2} \addtolength{\tabcolsep}{2pt}
  \caption{Perturbation bounds for the KS equation 
  }\label{ks-tableh5678}
  \begin{center} {\scriptsize
   \begin{tabular}[c]{|c|c|c|c|c|c|c|}
    \hline
\multicolumn{7}{|c|}{$h=0.05$} \\  \hline
$\epsilon$& $g/d$ & $1/(g-d)$  & $\chi_*$ & $\xi_*$ & $\tau_*$  & $\gamma_*$ \\  \hline
$10^{-12}$  &  7.4120e+00&   9.3503e-02&   1.6497e-13 &  7.4924e-11&   5.7110e-11&   7.4924e-11   \\
$ 10^{-10}$  & 7.4120e+00 &  9.3503e-02 &  1.1055e-11 &  7.4925e-09&   5.7111e-09&   7.4925e-09  \\
$ 10^{-8}$  &  7.4120e+00&   9.3503e-02 &  1.1093e-09&   7.4925e-07&   5.7111e-07&   7.4925e-07 \\
$ 10^{-6}$  &7.4120e+00  & 9.3503e-02 &  1.1093e-07 &  7.4931e-05&   5.7113e-05  & 7.4925e-05 \\
$ 10^{-4}$   &7.4120e+00  & 9.3503e-02 &  1.1092e-05&   7.5580e-03&   5.7295e-03  & 7.4925e-03  \\
 \hline
\multicolumn{7}{|c|}{$h=0.06$} \\  \hline
$\epsilon$& $g/d$ & $1/(g-d)$  & $\chi_*$ & $\xi_*$ & $\tau_*$  & $\gamma_*$ \\  \hline
$10^{-12}$ & 4.2452e+00 & 1.5213e-01 & 1.6599e-13 & 8.4634e-11 & 5.2363e-11 & 8.4634e-11\\
$ 10^{-10}$& 4.2452e+00 & 1.5213e-01 & 1.2266e-11 & 8.4635e-09 & 5.2363e-09 & 8.4635e-09\\
$ 10^{-8}$ & 4.2452e+00 & 1.5213e-01 & 1.2289e-09 & 8.4635e-07 & 5.2363e-07 & 8.4635e-07\\
$ 10^{-6}$ & 4.2452e+00 & 1.5213e-01 & 1.2289e-07 & 8.4644e-05 & 5.2365e-05 & 8.4635e-05\\
$ 10^{-4}$ & 4.2452e+00 & 1.5213e-01 & 1.2288e-05 & 8.5585e-03 & 5.2493e-03 & 8.4635e-03\\
 \hline
\multicolumn{7}{|c|}{$h=0.07$} \\  \hline
$\epsilon$ & $g/d$ & $1/(g-d)$  & $\chi_*$ & $\xi_*$ & $\tau_*$  & $\gamma_*$ \\  \hline
$10^{-12}$&   2.5866e+00&   2.5755e-01&   2.4601e-13&   1.0550e-10&   4.6671e-11&   1.0550e-10 \\
$ 10^{-10}$&   2.5866e+00&   2.5755e-01&   1.2805e-11&   1.0550e-08&   4.6670e-09&   1.0550e-08    \\
$ 10^{-8}$ &   2.5866e+00&   2.5755e-01&   1.2717e-09&   1.0550e-06&   4.6670e-07&   1.0550e-06     \\
$ 10^{-6}$ &   2.5866e+00&   2.5755e-01&   1.2717e-07&   1.0552e-04&   4.6671e-05&   1.0550e-04 \\
$ 10^{-4}$ &   2.5866e+00&   2.5755e-01&   1.2716e-05&   1.0736e-02&   4.6756e-03&   1.0550e-02\\
 \hline
\multicolumn{7}{|c|}{$h=0.08$} \\  \hline
$\epsilon$& $g/d$ & $1/(g-d)$  & $\chi_*$ & $\xi_*$ & $\tau_*$  & $\gamma_*$ \\  \hline
$10^{-12}$ &  1.6602e+00&   5.1773e-01&   1.4211e-12&   -&   4.0590e-10&   1.6355e-09 \\
$ 10^{-10}$ &  1.6602e+00&   5.1773e-01&   1.4318e-11&  -&   4.0590e-09&   1.6355e-08    \\
$ 10^{-8}$   & 1.6602e+00 &  5.1773e-01 &  1.4276e-09 & - &  4.0590e-07 &  1.6355e-06    \\
$ 10^{-6}$  &1.6602e+00  & 5.1773e-01  & 1.4276e-07  & -  & 4.0590e-05  & 1.6355e-04  \\
$ 10^{-4}$   &1.6602e+00   &5.1773e-01   &1.4275e-05   &-  &4.0645e-03   &1.6355e-02\\
 \cline{2-7}\hline
\end{tabular} }
  \end{center}
\end{table}

\begin{figure}[ht]
\centering
\includegraphics[width=0.75\textwidth]{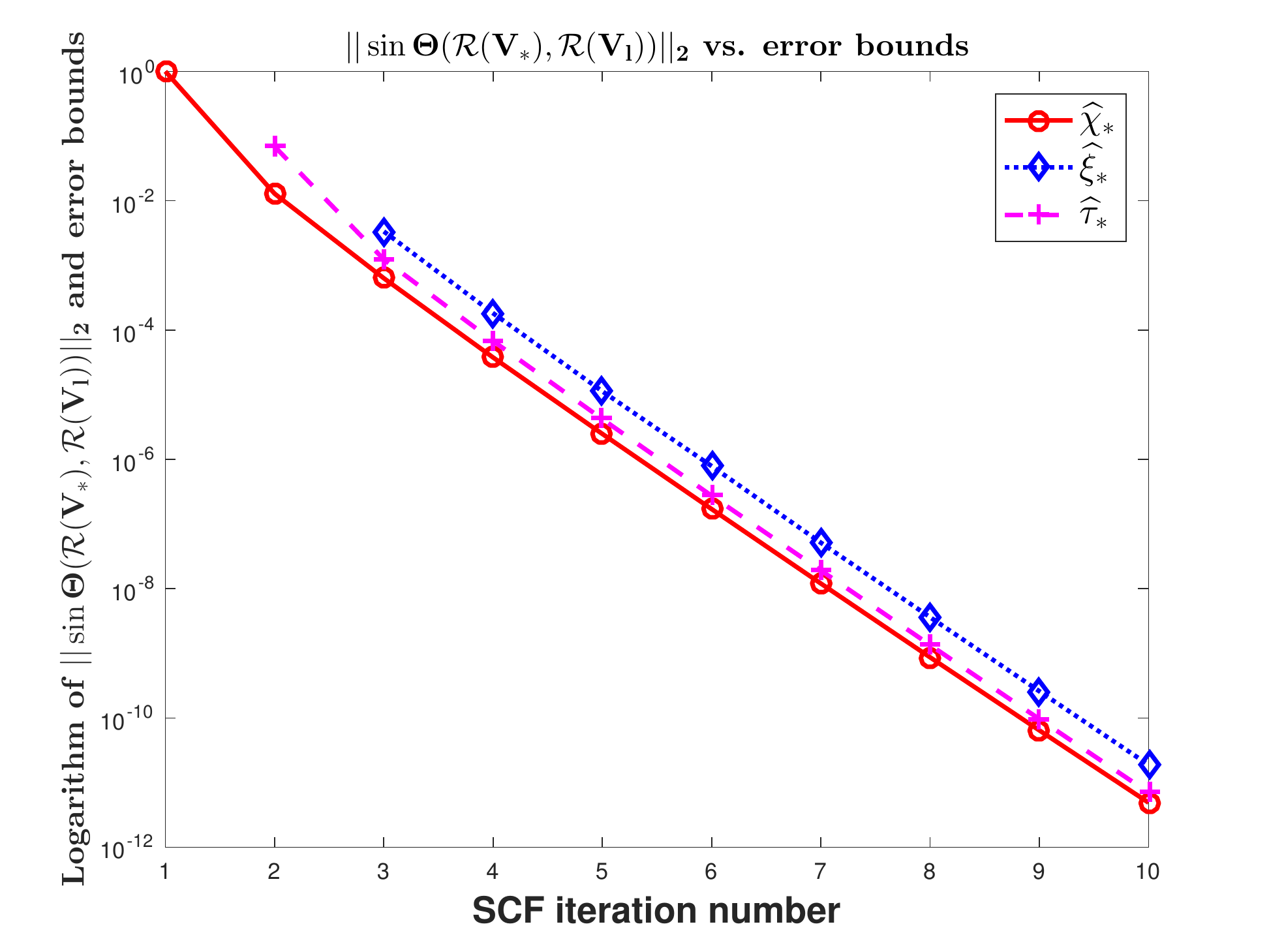}
\caption{$\|\sin\Theta(\mathcal{R}(V_*),\mathcal{R}(V_l))\|_2$ vs. error bounds for the  KS equation}
 \label{ks-fig2}
\end{figure}

Set $n=50$, $k=4$, $h=0.04$.
Figure \ref{ks-fig2} displays $\hat{\chi}_*$, the error bounds $\hat{\xi}_*$ and $\hat{\tau}_*$.
We can see from Figure~\ref{ks-fig2} that as SCF iterations converge, $\hat{\chi}_*$, $\hat{\xi}_*$ and $\hat{\tau}_*$ decrease linearly.
The error bounds $\hat{\xi}_*$ and $\hat{\tau}_*$  are good upper bounds for $\hat{\chi}_*$, and the latter one is sharper.
Also note that $\hat{\tau}_*$ is applicable from the second iteration, meanwhile $\hat{\xi}_*$ is applicable from the third,
which indicates that Corollary~\ref{cor2} has weaker assumption than that of Corollary~\ref{cor1} in this case.

\subsection{Application to the trace ratio optimization}
We consider the following maximization problem of the sum of the trace ratio:
\begin{equation}\label{e:TR}
\max_{V\in\mathbb{R}^{n\times k}, V^{\T}V=I_k} f(V):=\frac{{\rm tr}(V^{\T}AV)}{{\rm tr}(V^{\T}BV)}+{\rm tr}(V^{\T}CV),
\end{equation}
where ${\rm tr}(\cdot)$ means the trace of a square matrix, $A,B,C\in\mathbb{R}^{n\times n}$ are real symmetric with $B$ being positive definite, and $k<n$.

 As shown in \cite{zhli:2014a},  any critical point $V$ of (\ref{e:TR}) is a solution to the following nonlinear eigenvalue problem
 \begin{equation}\label{e:tr-kkt}
E(V)V=V(V^{\T}E(V)V),
\end{equation}
where
\[
E(V)=A\frac{1}{\phi_B(V)}-B\frac{\phi_A(V)}{\phi^2_B(V)} +C,
\]
and for any symmetric matrix $S$, $\phi_S(V)$ is defined as $\phi_S(V):={\rm tr}(V^{\T}SV)$.
Moreover, if $V$ is a global maximizer, then it is an orthonormal eigenbasis of $E(V)$ corresponding to its $k$ largest eigenvalues.

Let $P=VV^{\T}$, and note that $\phi_A(V)={\rm tr}(AP)$ and $\phi_B(V)={\rm tr}(BP)$ are functions of $P$,
then by setting
\[
A_0=C, \quad A_2(P)=A\frac{1}{\phi_B(V)}-B\frac{\phi_A(V)}{\phi^2_B(V)},
\]
the Problem (\ref{e:tr-kkt}) can be rewritten as
\begin{equation}\label{e:tr-kkt2}
(A_0+A_2(P))V=V(V^{\T}(A_0+A_2(P))V),
\end{equation}
which is of the form \eqref{eq:nep} with $A_1(P)\equiv 0$.

Suppose that $A,B,C$ are perturbed slightly, we have the following perturbed equation of  (\ref{e:tr-kkt2}):
\begin{equation}\label{e:tr-kkt2-pert}
(\widetilde{A}_0+\widetilde{A}_2(\widetilde{P}))\widetilde{V}=\widetilde{V}(\widetilde{V}^{\T}(\widetilde{A}_0+\widetilde{A}_2(\widetilde{P}))\widetilde{V}),
\end{equation}
where
\begin{eqnarray*}
&&\widetilde{P}=\widetilde{V}\widetilde{V}^{\T},\quad \widetilde{A}_0=A_0+\Delta C=C+\Delta C,\\
&&\widetilde{A}_2(\widetilde{P})=(A+\Delta A)\frac{1}{\phi_{B+\Delta B}(\widetilde{V})} -(B+\Delta B)\frac{\phi_{A+\Delta A}(\widetilde{V})}
{\phi^2_{B+\Delta B}(\widetilde{V})},
\end{eqnarray*}
and $\Delta A$, $\Delta B$, $\Delta C$ are real symmetric matrices.

Then by calculations, we have
\begin{eqnarray*}
&&\delta_0  = \| \Delta C\|_2,  \\
&&\delta_2  = \sup_{P\in\mpk} {\|\wt{A}_2(P) - A_2(P)\|_2}\le \|A\|_2\frac{\Omega_{\Delta B}}{\omega_{B+\Delta B} \omega_B}\\
&&\qquad +\|B\|_2\frac{\Omega_{\Delta A}\Omega_B^2+\Omega_A(\Omega_B+\Omega_{B+\Delta B})\Omega_{\Delta B}}{\omega_{B+\Delta B}^2  \omega_B^2} + \|\Delta A\|_2\frac{1}{\omega_{B+\Delta B} } + \|\Delta B\|_2\frac{\Omega_{A+\Delta A}}{\omega_{B+\Delta B}^2 },\\
&& d=d_2 = \sup_{{P}\ne P_*, {P}\in \mpk}  \frac{\|A_2({P}) - A_2(P_*)\|_2}{\|{P} - P_*\|_2}\le\frac{2\|A\|_2\|B\|_2}{\omega_B^2}
+ \frac{2 \|B\|_2^2\Omega_A\Omega_B}{\omega_B^4},
\end{eqnarray*}
where
\[
\Omega_W=\sum\limits_{j=n-k+1}^n|\lambda_j(W)|,\quad \omega_W=\sum\limits_{j=1}^k|\lambda_j(W)|.
\]
Here, $\{\lambda_j(W)\}_{j=1}^n$ are the eigenvalues of a Hermitian matrix $W\in\mathbb{C}^{n\times n}$ with
\[
|\lambda_1(W)| \le |\lambda_2(W)|\le\cdots\le |\lambda_n(W)|.
\]

To illustrate  our theoretical results, we randomly generate the real symmetric matrices $A,B,C$, $\Delta A$, $\Delta B$, $\Delta C$,
by using the {\tt MATLAB} built-in functions {\tt rand}, {\tt randn}, {\tt orth}, {\tt diag} and {\tt ones}:
\begin{eqnarray*}
&& A={\tt rand}(n,n);\quad A=(A'+A)/2; \quad Q = {\tt orth}({\tt randn}(n,n)); \\
&& B = Q*{\tt diag}(50+\beta*(2*{\tt rand}(n,1)-{\tt ones}(n,1)))*Q';\quad B = (B'+B)/2;\\
&& C={\tt randn}(n,n);\quad C=(C'+C)/2;\\
&& \Delta A = \epsilon*(2*{\tt rand}(n,n) - {\tt ones}(n,n)); \Delta A = (\Delta A'+\Delta A)/2; \\
&&\Delta B = \epsilon*(2*{\tt rand}(n,n) - {\tt ones}(n,n)); \Delta B = (\Delta B'+\Delta B)/2; \\
&&\Delta C = \epsilon*(2*{\tt rand}(n,n) - {\tt ones}(n,n)); \Delta C = (\Delta C'+\Delta C)/2.
\end{eqnarray*}

For simplicity, we fix  $n=100$, $k=5$, and $\beta=10$.
Figure \ref{tr-fig} plots $\chi_*$, and the perturbation bounds $\xi_*$ and $\tau_*$ for varying $\epsilon$.
Figure \ref{tr-fig2} shows  $\hat{\chi}_*$ versus the error bounds $\hat{\xi}_*$ and $\hat{\tau}_*$ for different $\beta$ in terms of the SCF iterations.

\begin{figure}[ht]
\centering
\includegraphics[width=0.49\textwidth]{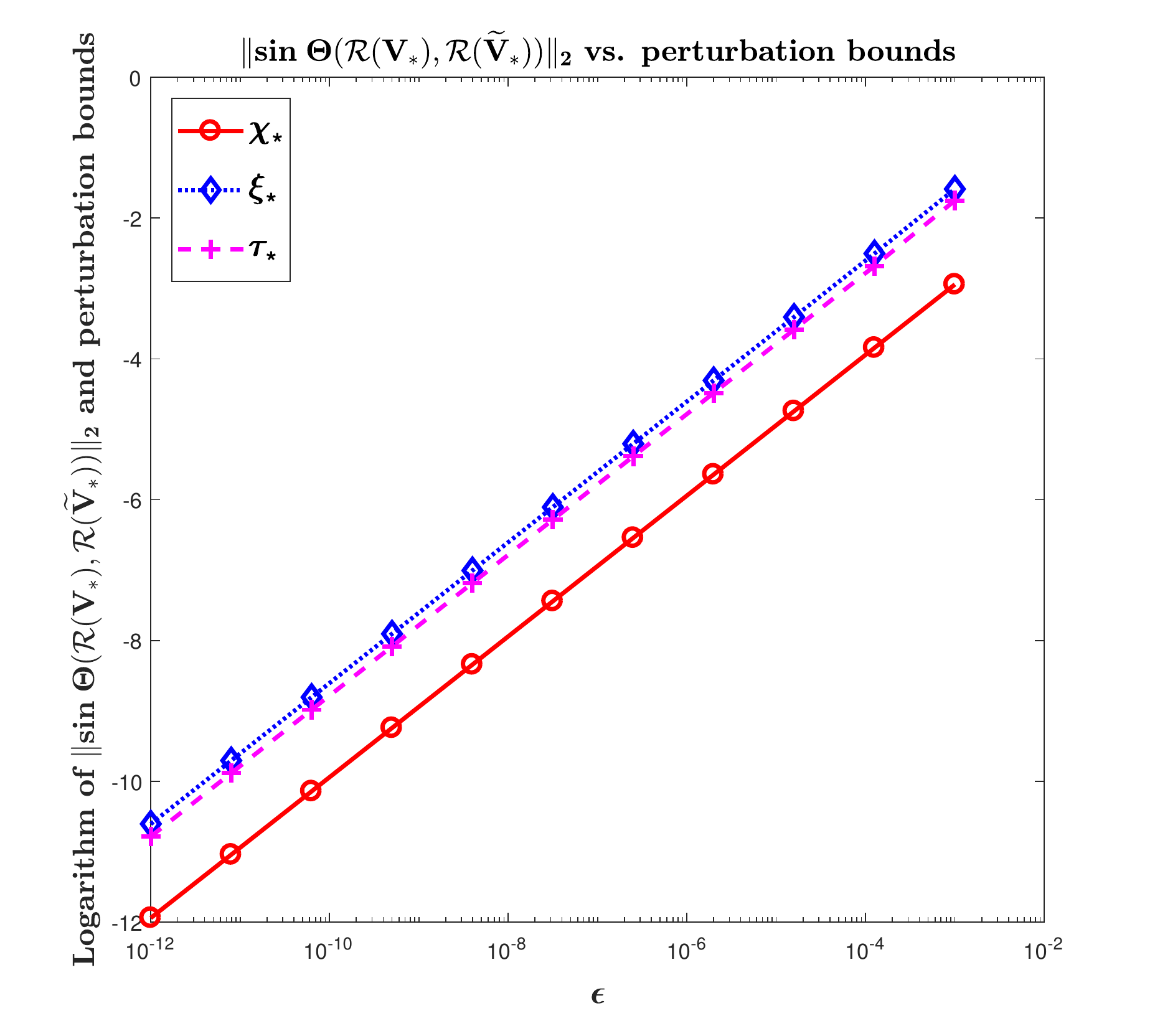}
\includegraphics[width=0.49\textwidth]{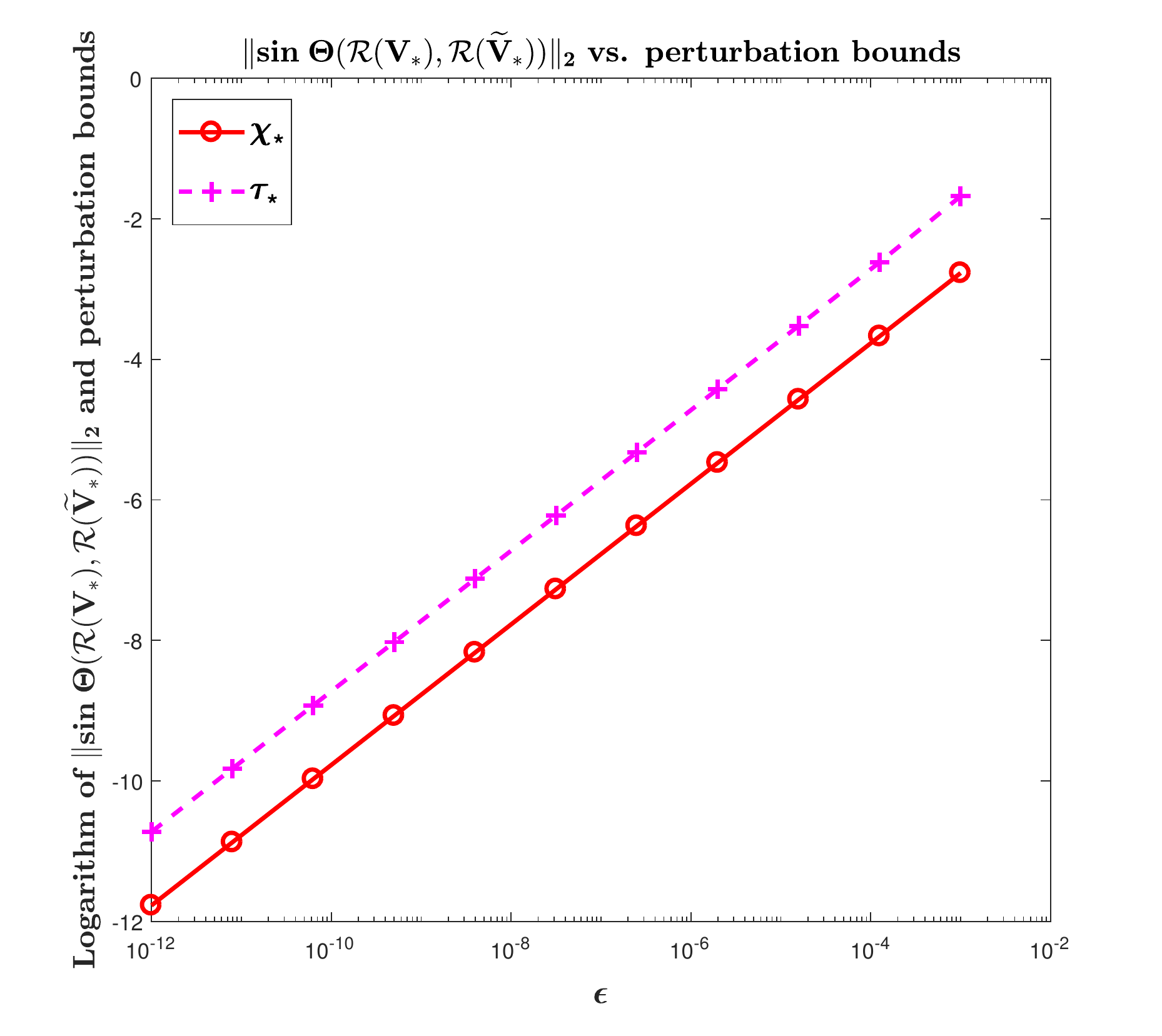}\\
        \small{$g/d=2.4152$ and $1/(g-d)=3.0297$}\hspace{.5in}
        \small{$g/d=1.9262$ and $1/(g-d)=4.9155$}\\
\includegraphics[width=0.49\textwidth]{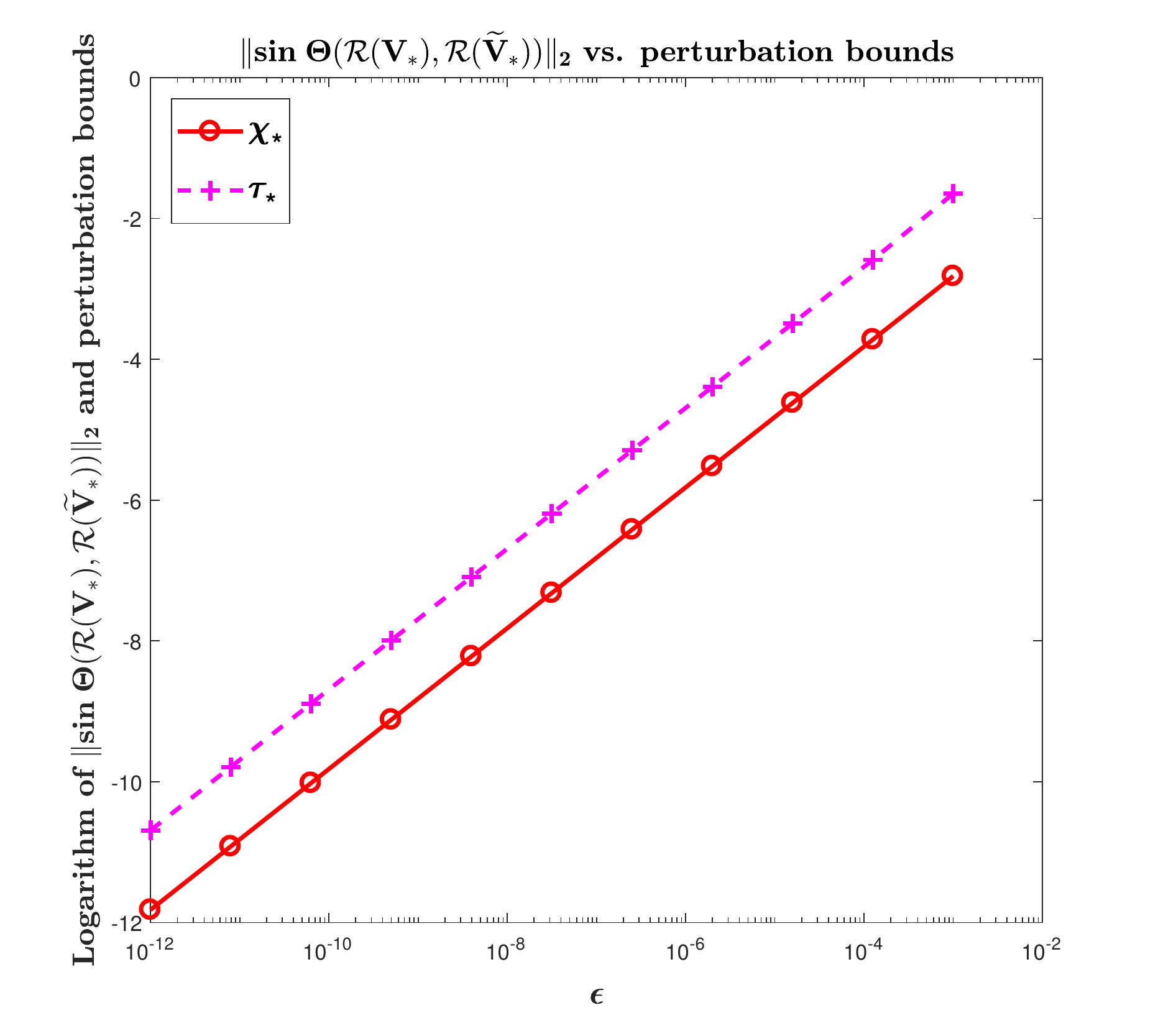}
\includegraphics[width=0.49\textwidth]{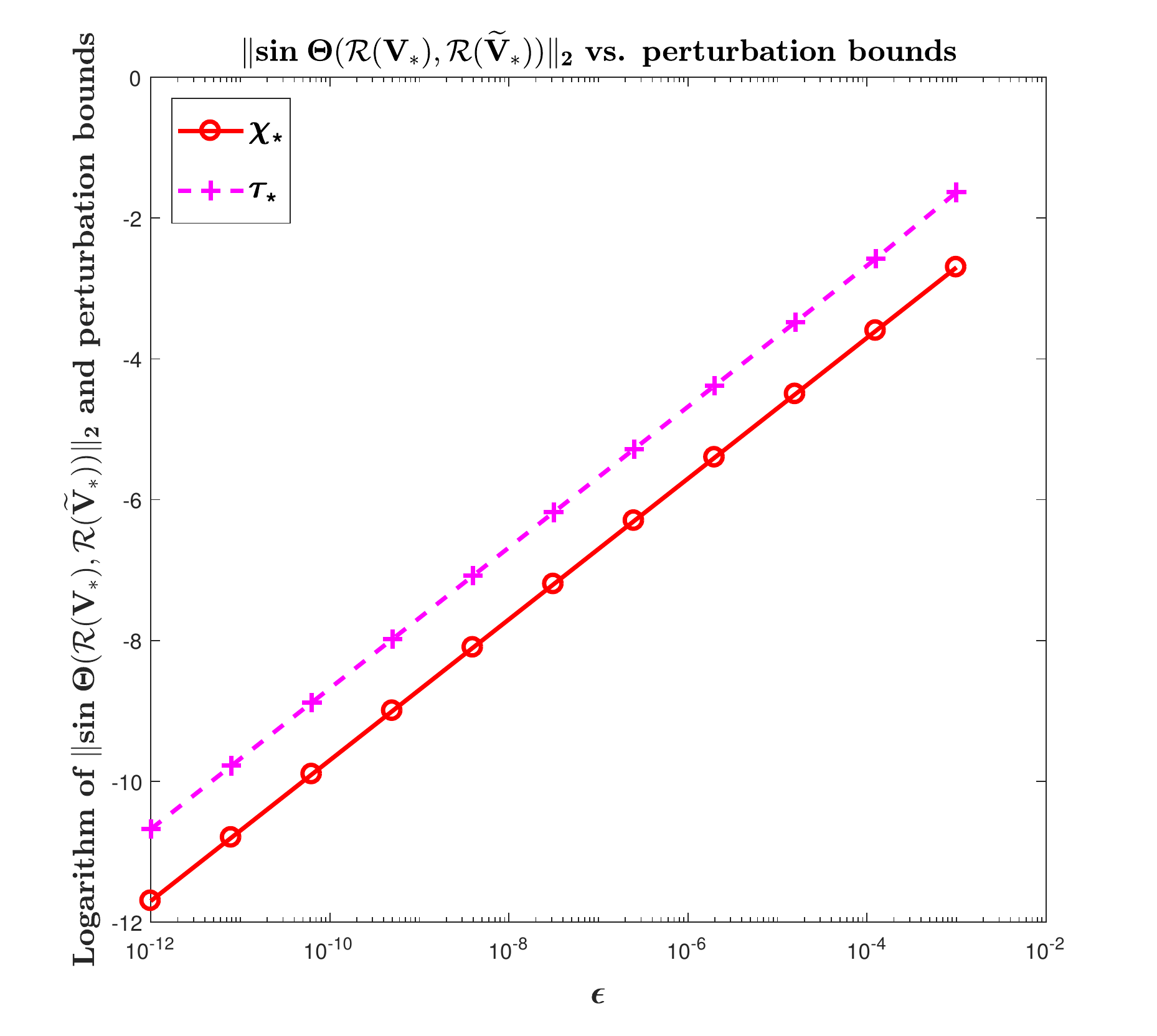}\\
        \small{$g/d=1.5351$ and $1/(g-d)=7.9778$}\hspace{.5in}
        \small{$g/d=1.0739$ and $1/(g-d)=60.229$}
\caption{$\|\sin\Theta(\mathcal{R}(V_*),\mathcal{R}(\wt{V}_*))\|_2$ vs. perturbation bounds for the trace ratio optimization}
 \label{tr-fig}
\end{figure}
\begin{figure}[ht]
\centering
\includegraphics[width=0.49\textwidth]{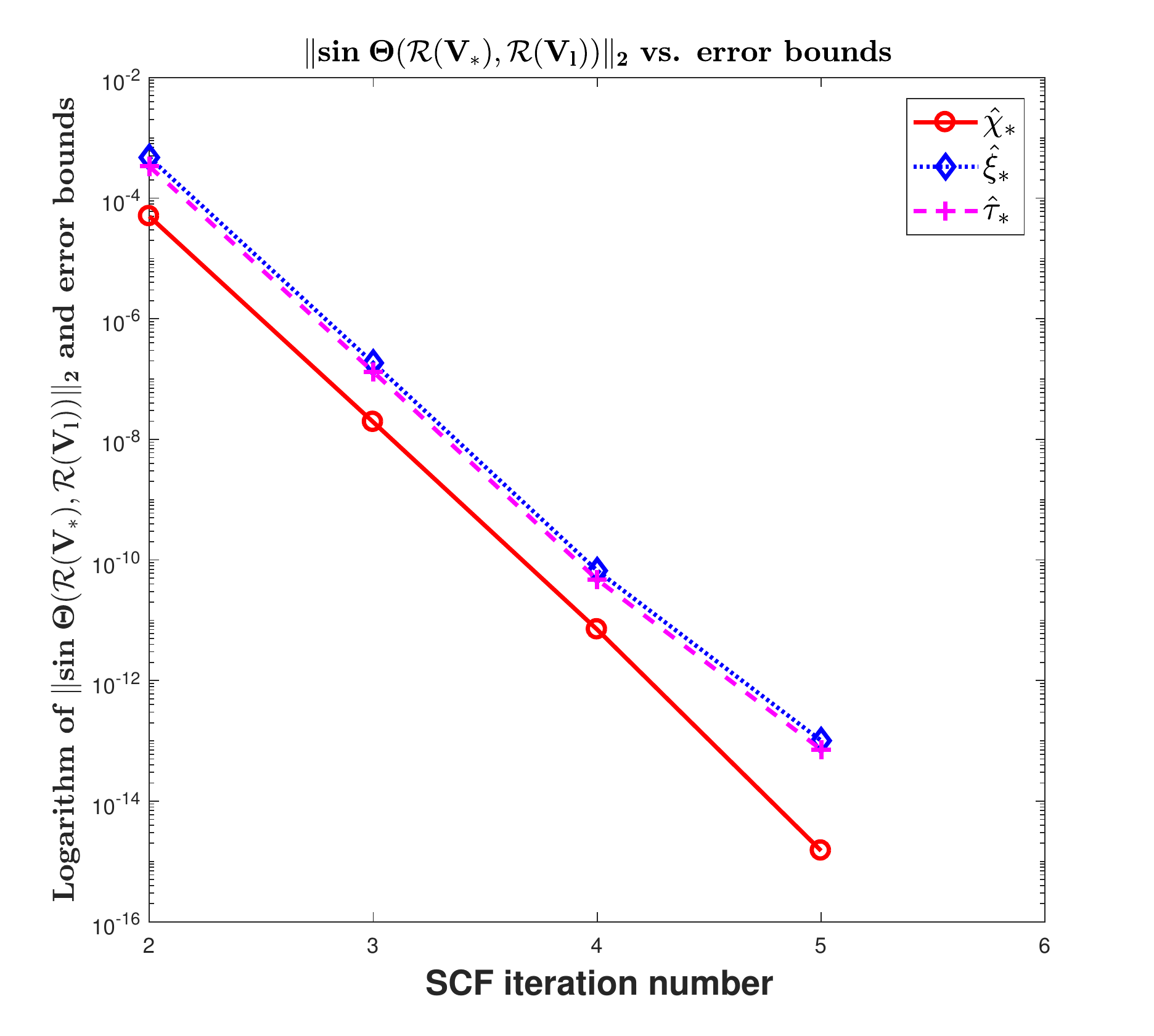}
\includegraphics[width=0.49\textwidth]{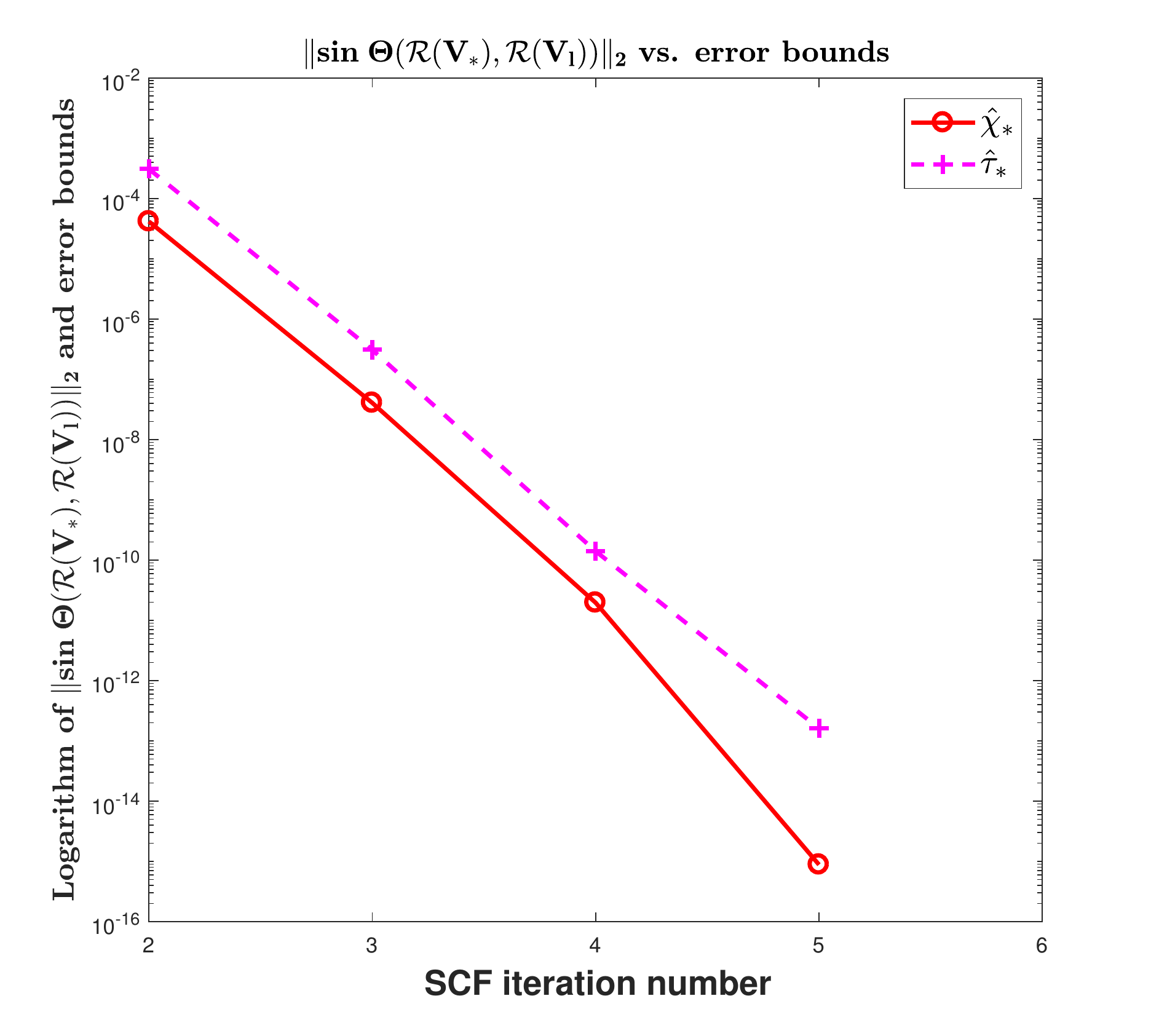}\\
        \small{$\beta=5$}\hspace{2.2in}
        \small{$\beta=10$}\\
\includegraphics[width=0.49\textwidth]{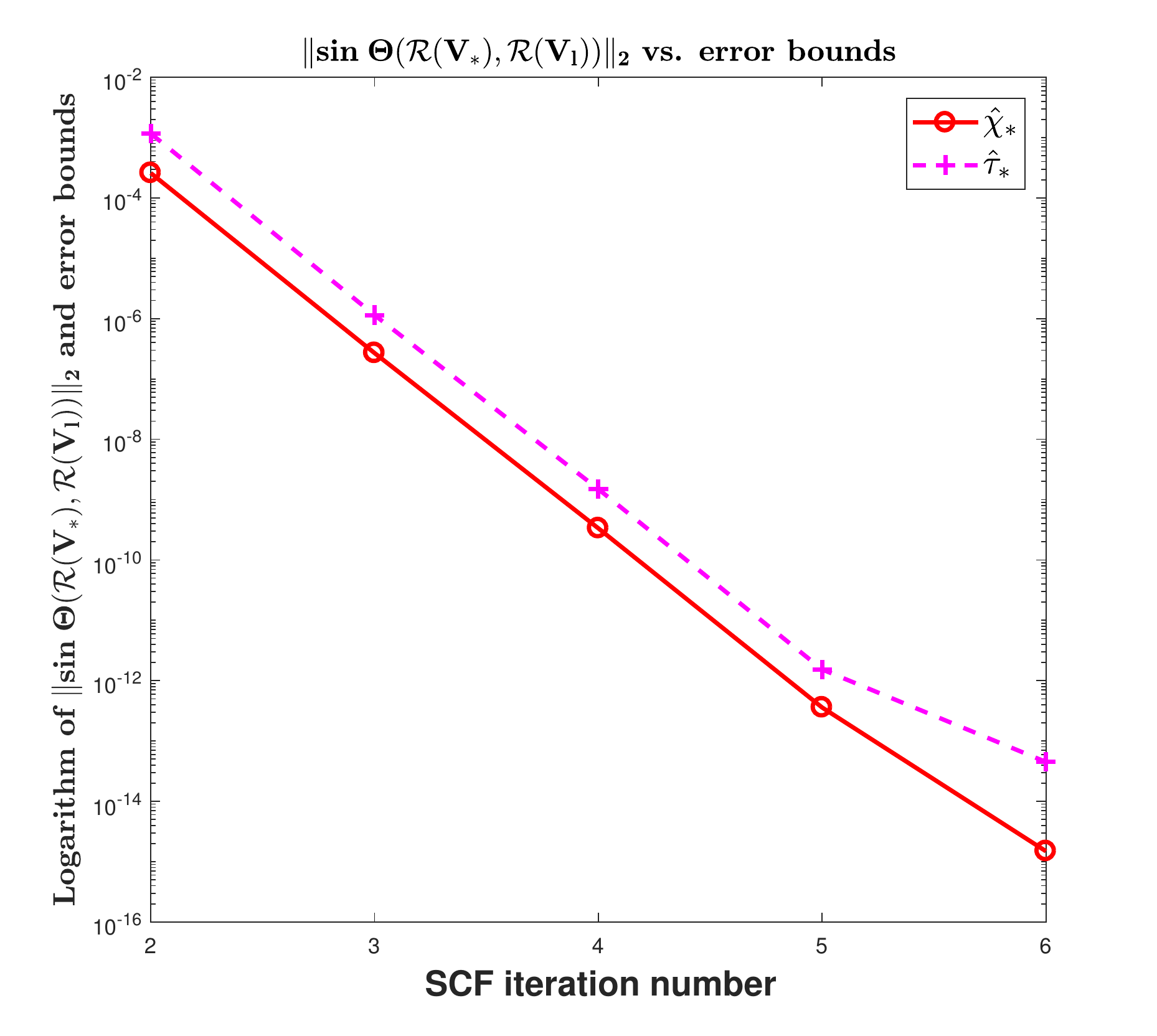}\\
       \small{$\beta=15$}
\caption{$\|\sin\Theta(\mathcal{R}(V_*), \mathcal{R}(V_l))\|_2$ vs. error bounds for the trace ratio optimization}
 \label{tr-fig2}
\end{figure}

We observe from Figure \ref{tr-fig} that when $\frac{g}{d}>2$,  both the assumptions of Theorem~\ref{thm1}  and Theorem~\ref{thm2} hold.
In this case, the perturbation bounds $\xi_*$ and $\tau_*$ are good upper bounds for the solution perturbation $\chi_*$ when $\delta$ is small,
while the perturbation bound  $\tau_*$ is slightly sharper than $\xi_*$ and $\gamma_*$.
However, when $1<\frac{g}{d}<2$, only the assumption of Theorem~\ref{thm2} holds.
In this case, the perturbation bound $\tau_*$ is good upper bounds for the solution perturbation $\chi_*$.
We have the similar observation for Figure \ref{tr-fig2} on $\hat{\chi}_*$ and the error bounds $\hat{\xi}_*$ and $\hat{\tau}_*$ in terms of the SCF iterations.

To further illustrate our theoretical results, in Table \ref{tr-table},
we report the estimated values of $\frac{g}{d}$ and $\frac{1}{g-d}$, the solution perturbation $\chi_*$, the perturbation bounds $\xi_*$, $\tau_*$, and $\gamma_*$ for fixed $\delta$ and varying  $\beta$,
where the symbol ``-" means the  upper bound $\xi_*$ is not a valid estimation value since the assumption  of Theorem~\ref{thm1} does not hold.
 Also,  Table \ref{tr-table2} displays the estimated values of $\frac{\hat{g}}{\hat{d}}$ and $\frac{1}{\hat{g}-\hat{d}}$, the solution perturbation $\hat{\chi}_*$, the error bounds $\hat{\xi}_*$, $\hat{\tau}_*$, and $\hat{\gamma}_*$ for  varying  $\beta$ in terms of the SCF iterations,
 where the symbol ``-" means the corresponding error bound  is not a valid estimation value since the assumption  of Corollary~\ref{cor1} or Corollary~\ref{cor2}  does not hold or the perturbation $\|R\|_2$ is not sufficiently small.

We see from Table \ref{tr-table} that, for a fixed $\delta$ and different $\beta$,
the estimated values of $\xi_*$, $\tau_*$, and $\gamma_*$ are valid upper bounds for the solution perturbation bound $\chi_*$.
We also see that  $\tau_*$ is shaper than $\xi_*$ and $\gamma_*$ and the assumption of Theorem~\ref{thm2} is weaker than that of Theorem~\ref{thm1}.
We have the similar observation for Table \ref{tr-table2} on  $\hat{\chi}_*$ and the error bounds $\hat{\xi}_*$, $\hat{\tau}_*$, and $\hat{\gamma}_*$ in terms of the SCF iterations.

\begin{table}[ht]\renewcommand{\arraystretch}{1.2} \addtolength{\tabcolsep}{2pt}
  \caption{Perturbation bounds for the trace ratio optimization}\label{tr-table}
  \begin{center} {\scriptsize
   \begin{tabular}[c]{|c|c|c|c|c|c|c|}
     \hline
\multicolumn{7}{|c|}{$\delta=10^{-12}$} \\  \hline
$\beta$& $g/d$ & $1/(g-d)$  & $\chi_*$ & $\xi_*$ & $\tau_*$  & $\gamma_*$ \\  \hline
$ 5$ & 2.7149e+00 & 3.9202e+00 & 1.0410e-12  & 3.2628e-11& 2.2968e-11 &  3.2628e-11 \\
$ 8$ & 2.1012e+00 & 4.7248e+00 &  1.0422e-12 & 3.9574e-11 &  2.0639e-11 &  3.9574e-11 \\
$10$ & 1.7617e+00 & 5.7274e+00 & 1.0383e-12 & -  &  1.9023e-11 &  4.8249e-11 \\
$12$ & 1.4442e+00 & 8.0504e+00 &  1.0387e-12 & - &  1.7211e-11 &  6.8344e-11 \\
$15$ & 1.0655e+00 & 4.0283e+01 & 1.0415e-12 & - &  1.4552e-11 &  3.4746e-10 \\
 \hline
\multicolumn{7}{|c|}{$\delta=10^{-6}$} \\  \hline
$\beta$& $g/d$ & $1/(g-d)$ & $\chi_*$  & $\xi_*$ & $\tau_*$& $\gamma_*$  \\  \hline
$ 5$ & 2.7149e+00 & 3.9202e+00 & 1.0407e-06  & 3.2630e-05 &  2.2972e-05 &  3.2630e-05 \\
$ 8$ & 2.1012e+00 & 4.7248e+00 & 1.0407e-06 &  3.9577e-05 &  2.0641e-05 &  3.9574e-05 \\
$10$ & 1.7617e+00 & 5.7274e+00 & 1.0407e-06  & - &  1.9024e-05 & 4.8254e-05 \\
$12$ & 1.4442e+00 & 8.0504e+00 &  1.0408e-06 &  - &  1.7212e-05 &  6.8344e-05 \\
$15$ & 1.0655e+00 & 4.0283e+01 & 1.0406e-06 &  - &  1.4552e-05 &  3.4746e-04 \\
 \hline
\multicolumn{7}{|c|}{$\delta=10^{-4}$} \\  \hline
$\beta$& $g/d$ & $1/(g-d)$ & $\chi_*$  & $\xi_*$ & $\tau_*$& $\gamma_*$  \\  \hline
$ 5$ & 2.7149e+00 & 3.9202e+00 & 1.0407e-04 & 3.2798e-03 &  2.3335e-03 &  3.2798e-03 \\
$ 8$ & 2.1012e+00 & 4.7248e+00 & 1.0407e-04 & 3.9876e-03 &  2.0865e-03 &  3.9574e-03 \\
$10$ & 1.7617e+00 & 5.7274e+00 & 1.0407e-04 & - &  1.9183e-03 &  4.8797e-03 \\
$12$ & 1.4442e+00 & 8.0504e+00 & 1.0407e-04 & - &  1.7318e-03 &  6.8344e-03 \\
$15$ & 1.0655e+00 & 4.0283e+01 & 1.0406e-04 &  -  & 1.4608e-03 &  3.4746e-02 \\
 \cline{2-7}\hline
\end{tabular} }
  \end{center}
\end{table}

\begin{table}[ht]\renewcommand{\arraystretch}{1.2} \addtolength{\tabcolsep}{2pt}
  \caption{Error bounds for the trace ratio optimization}\label{tr-table2}
  \begin{center} {\scriptsize
   \begin{tabular}[c]{|c|c|c|c|c|c|c|}
     \hline
\multicolumn{7}{|c|}{$\beta=5$} \\  \hline
$l$& $\hat{g}/\hat{d}$ & $1/(\hat{g}-\hat{d})$  & $\hat{\chi}_*$ & $\hat{\xi}_*$ & $\hat{\tau}_*$  & $\hat{\gamma}_*$ \\  \hline
$ 1$ & 3.4532e+00 & 2.7704e+00 & 9.9991e-01 & -                   & 3.1119e-01 & 2.0029e+01  \\
$ 2$ & 2.8587e+00 & 3.6565e+00 & 5.0006e-05 &  4.7307e-04 & 3.3702e-04 & 4.7272e-04 \\
$ 3$ & 2.8587e+00 & 3.6565e+00 & 1.9341e-08 &  1.8521e-07 & 1.3174e-07 & 1.8521e-07 \\
$ 4$ & 2.8587e+00 & 3.6565e+00 & 7.0187e-12 &  6.6451e-11 & 4.7267e-11 & 6.6451e-11 \\
$ 5$ & 2.8587e+00 & 3.6565e+00 & 1.5051e-15 &  1.0091e-13 & 7.1775e-14 &1.0091e-13  \\
 \hline
\multicolumn{7}{|c|}{$\beta =10$} \\  \hline
$l$& $\hat{g}/\hat{d}$ & $1/(\hat{g}-\hat{d})$  & $\hat{\chi}_*$ & $\hat{\xi}_*$ & $\hat{\tau}_*$  & $\hat{\gamma}_*$ \\  \hline
$ 1$ & 2.8375e+00 & 2.3391e+00 & 9.9992e-01 & -                  & 2.9621e-01 &1.8722e+01  \\
$ 2$ & 1.8076e+00 & 5.3220e+00 & 4.1495e-05 & -                  & 3.0839e-04 & 7.7035e-04  \\
$ 3$ & 1.8076e+00 & 5.3220e+00 & 4.0590e-08 & -                  & 3.0837e-07 & 7.7134e-07 \\
$ 4$ & 1.8076e+00 & 5.3220e+00 & 1.9616e-11 & -                  & 1.4031e-10 & 3.5097e-10  \\
$ 5$ & 1.8076e+00 & 5.3220e+00 & 8.8364e-16 & -                  & 1.6159e-13 & 4.0419e-13  \\
\hline
\multicolumn{7}{|c|}{$\beta =15$} \\  \hline
$l$& $\hat{g}/\hat{d}$ & $1/(\hat{g}-\hat{d})$  & $\hat{\chi}_*$ & $\hat{\xi}_*$ & $\hat{\tau}_*$  & $\hat{\gamma}_*$ \\  \hline
$ 1$ & 8.3302e-01 &-1.6520e+01 &  9.9901e-01 &        -            & -                     & -   \\
$ 2$ & 1.1592e+00 & 1.7326e+01 & 2.5827e-04 &       -             & 1.1659e-03 & 1.2077e-02  \\
$ 3$ & 1.1592e+00 & 1.7326e+01 & 2.6832e-07 &        -            & 1.1450e-06 & 1.1899e-05  \\
$ 4$ & 1.1592e+00 & 1.7326e+01 & 3.3430e-10 &        -            & 1.4918e-09 & 1.5502e-08  \\
$ 5$ & 1.1592e+00 & 1.7326e+01 & 3.5858e-13 &         -           & 1.5329e-12 & 1.5929e-11  \\
$ 6$ & 1.1592e+00 & 1.7326e+01 & 1.4886e-15 &         -           & 4.4921e-14 & 4.6680e-13  \\
 \cline{2-7}\hline
\end{tabular} }
  \end{center}
\end{table}


\section{Conclusion}\label{sec:conclusion}
In this paper, we have studied the perturbation theory of NEPv \eqref{eq:nep}.
Two perturbation bounds are established,
based on which the condition number for the NEPv can be introduced.
Furthermore, two computable error bounds are also obtained.
Theoretical results are applied to the KS equation and the trace ratio problem.
Numerical results show that both the perturbation bounds and the error bounds are fairly sharp,
especially the perturbation bound in Theorem ~\ref{thm2} and the error bound in Corollary~\ref{cor2}.

\bibliography{perturbation_nep.bbl}
\end{document}